\numberwithin{equation}{section}
\numberwithin{figure}{section}
\numberwithin{table}{section}
\theoremstyle{plain}
\newtheorem{theorem}{Theorem}[section]
\newtheorem{corollary}[theorem]{Corollary}
\newtheorem{proposition}[theorem]{Proposition}
\theoremstyle{definition}
\theoremstyle{remark}
\newtheorem{remark}[theorem]{Remark}
\newcommand{\bd}{\operatorname{bd}}
\newcommand{\apex}{\mathop{\mathrm{apex}}\nolimits}
\newcommand{\ext}{\mathop{\mathrm{ext}}\nolimits}
\newcommand{\skel}{\mathop{\mathrm{skel}}\nolimits}
\newcommand{\vol}{\mathop{\mathrm{vol}}\nolimits}
\newcommand{\supp}{\mathop{\mathrm{supp}}\nolimits}
\newcommand{\bx}{\mathbf{x}}
\def\EE{\mathbb{E}}
\def\JJ{\mathbb{J}}
\def\NN{\mathbb{N}}
\def\PP{\mathbb{P}}
\def\RR{\mathbb{R}}
\def\TT{\mathbb{T}}
\def\cC{\mathcal{C}}
\def\cF{\mathcal{F}}
\def\cG{\mathcal{G}}
\def\cL{\mathcal{L}}
\def\cT{\mathcal{T}}
\def\cV{\mathcal{V}}
\def\cW{\mathcal{W}}
\newcommand{\dd}{{\rm d}}
\newcommand{\pow}{\mathop{\mathrm{pow}}\nolimits}
\newcommand{\ii}{{\rm{i}}}
\newcommand{\eps}{\varepsilon}
\let\@fnsymbol\@alph
\begin{document}


\title{\bfseries Sectional Voronoi tessellations: \\ Characterization and high-dimensional limits}

\author{Anna Gusakova\footnotemark[1],\; Zakhar Kabluchko\footnotemark[2],\; and Christoph Th\"ale\footnotemark[3]}

\date{}
\renewcommand{\thefootnote}{\fnsymbol{footnote}}
\footnotetext[1]{M\"unster University, Germany. Email: gusakova@uni-muenster.de}

\footnotetext[2]{M\"unster University, Germany. Email: zakhar.kabluchko@uni-muenster.de}

\footnotetext[3]{Ruhr University Bochum, Germany. Email: christoph.thaele@rub.de}

\maketitle

\begin{abstract}
\noindent
The intersections of beta-Voronoi, beta-prime-Voronoi and Gaussian-Voronoi tessellations in $\mathbb{R}^d$ with $\ell$-dimensional affine subspaces, $1\leq \ell\leq d-1$, are shown to be random tessellations of the same type but with different model parameters.
In particular, the intersection of a classical Poisson-Voronoi tessellation with an affine subspace is shown to have the same distribution as a certain beta-Voronoi tessellation. The geometric properties of the typical cell and, more generally, typical $k$-faces, of the sectional Poisson-Voronoi tessellation are studied in detail. It is proved that in high dimensions, that is as $d\to\infty$, the intersection of the $d$-dimensional Poison-Voronoi tessellation with an affine subspace of fixed dimension $\ell$ converges to the $\ell$-dimensional Gaussian-Voronoi tessellation. \\

\noindent {\bf Keywords:} Beta-Voronoi tessellation, Gaussian-Voronoi tessellation,  high-dimensional limit, Laguerre tessellation, Poisson point process, Poisson-Voronoi tessellation, sectional tessellation, stochastic geometry, typical cell\\
{\bf MSC:} 60D05, 60G55
\end{abstract}

\section{Introduction}

The present paper is devoted to the study of affine sections of Poisson-Voronoi tessellations. To define these,
let $\zeta\subset\RR^d$ be the set of atoms of a stationary point process in $\RR^d$.
For each point $x\in\zeta$ we construct the \textbf{Voronoi cell} $V(x,\zeta)$ of $x$ as the set of all points in $\RR^d$ which are closer to $x$ than to any other point of $\zeta$:
$$
V(x,\zeta) := \{z\in\RR^d:\|x-z\|\leq\|y-z\|\quad\text{for all }y\in\zeta\setminus\{x\}\},
$$
where $\|\,\cdot\,\|$ stands for the Euclidean {norm. The Voronoi cell} can be thought of as a zone of influence or attraction of the point $x$ and it is known that each Voronoi cell is a convex polytope in $\RR^d$, with probability $1$. The collection of all these polytopes is the \textbf{Voronoi tessellation} associated with $\zeta$. If $\zeta$ is a homogeneous Poisson point process with constant intensity $\rho>0$, this construction yields the   \textbf{Poisson-Voronoi tessellation}, denoted here by $\cW_{d,\rho}$,  -- one of the most classical models studied in stochastic geometry. We refer to the monographs \cite{OkabeEtAl,SW,SKM} for more detailed information, applications and further references on  Voronoi tessellations and in particular the Poisson-Voronoi tessellation.

A number of random tessellations studied in stochastic geometry, such as the Poisson hyperplane or the STIT tessellations, have the distinguished feature of being stable under intersections with lower-dimensional affine subspaces. By this we mean that the intersection with an affine subspace of one of these random tessellations is again a model of the same type within the intersecting subspace. For example, the intersection of a Poisson hyperplane tessellation with an affine subspace $L$ is again a Poisson hyperplane tessellation within $L$. However, a similar property is not true for the Poisson-Voronoi tessellation. In fact, it has been shown by Chiu, Van De Weygaert and Stoyan \cite{ChiuEtAlSection} that the intersection of the Poisson-Voronoi tessellation with an affine subspace cannot be a Voronoi tessellation induced by any stationary point process within the subspace. In other words, the sectional Poisson-Voronoi tessellation is necessarily a `non-Voronoi' tessellation. However, besides a few mean values determined in \cite{ChiuEtAlSection,MilesSection} further probabilistic or geometric information about the sectional Poisson-Voronoi tessellation seems not available in the existing literature, although they are of importance for stereological applications (see \cite[Chapter 11.5.4]{SKM} or \cite[Section 14.4.6]{NagelSurvey} as well as the references cited therein). It is one of the main purposes of this paper to derive a precise description of the sectional Poisson-Voronoi tessellation and to study its typical cell. We do this by establishing a connection with the so-called $\beta$-Voronoi tessellations, a random tessellation model we recently introduced and studied in the series of papers \cite{GKT21b,GKT20,GKT21,GKT21a}. Their analysis in turn was based on the connection with the class of beta random polytopes, which has already seen a number of applications in stochastic geometry \cite{kabluchko_formula,KTT,kabluchko_thaele_voronoi_sphere,beta_polytopes}.

We study the problem of the sectional Poisson-Voronoi tessellation just explained in a more general framework. In fact, the random tessellation we study is either
\begin{itemize}
\item a $\beta$-Voronoi tessellation $\cV_{d, \beta, \gamma}$ in $\RR^d$ with parameters $\beta\geq -1$ and $\gamma>0$,
\item a $\beta'$-Voronoi tessellation $\cV_{d, \beta, \gamma}'$ in $\RR^d$ with parameters $\beta > {d\over 2}+1$ and $\gamma>0$,
\item or a Gaussian-Voronoi tessellation $\cG_{d,\lambda}$ in $\RR^d$ with parameter $\lambda>0$;
\end{itemize}
a description of all these models will be provided in Section \ref{sec:betadefinition}. We remark that the classical Poisson-Voronoi tessellation generated by a stationary Poisson point process in $\RR^d$ with intensity $\rho>0$ appears in this framework as the $\beta$-Voronoi tessellation corresponding to the parameters $\beta=-1$ and {$\gamma=\pi^{{d+1\over 2}} \rho / \Gamma({d+1\over 2})$.}
Now, let $L\subset\RR^d$ be an affine subspace of dimension $1\leq\ell\leq d-1$. We show in Theorem \ref{tm:betaintersection} below that
\begin{itemize}
\item the sectional tessellation $\cV_{d, \beta, \gamma}\cap L$ is a $(\beta+{d-\ell\over 2})$-Voronoi tessellation in $L$ with the same $\gamma$,
\item the sectional tessellation $\cV_{d, \beta, \gamma}'\cap L$ is a $(\beta-{d-\ell\over 2})'$-Voronoi tessellation in $L$ with the same $\gamma$,
\item the sectional tessellation $\cG_{d,\lambda} \cap L$ is again a Gaussian-Voronoi tessellation in $L$ with the same $\lambda$.
\end{itemize}
In particular, the intersection of the classical Poisson-Voronoi tessellation $\cW_{d,\rho}$ with $L$ turns out to be a $\beta$-Voronoi tessellation within $L$ with $\beta={d-\ell\over 2}-1$ and {$\gamma = \pi^{{d+1\over 2}} \rho /\Gamma({d+1\over 2})$.} For clarity we should remark that none of the random tessellations $\cV_{d,\beta, \gamma}$ with $\beta>-1$ are actually  Voronoi tessellations. Our terminology is motivated by the fact that $\cV_{d,\beta,\gamma}$ may be viewed as a \textit{deformation} of the classical Poisson-Voronoi tessellation (which corresponds to $\beta=-1$).

With the identification of the sectional Poisson-Voronoi tessellation at hand, in the second part of this paper we study its geometric properties. More precisely, we determine in Theorem \ref{tm:cellintencity} its face intensities in terms of quantities which have already appeared in the study of beta random polytopes \cite{kabluchko_formula,beta_polytopes}. From here on, we determine the expected volume, the expected intrinsic volumes as well as the expected $f$-vector of the typical cell (and even more generally the typical $k$-face) of the sectional Poisson-Voronoi tessellation. Moreover, generalizing earlier results of Miles \cite{MilesSection} we consider the asymptotics, as $d\to\infty$,  of several characteristics (such as the volume of the typical cell) of the sectional Poisson-Voronoi tessellation and identify the limits with the corresponding characteristics of a suitable Gaussian-Voronoi tessellation. The weak convergence on the level of tessellations is discussed as well using a coupling construction similar to the one in \cite{GKT21}.

\section{Preliminaries on random tessellations}\label{sec:preliminaries_tess}
In this section we collect some definitions and facts about general stationary random tessellations in $\RR^d$. For more detailed discussions we refer the reader to \cite[Chapters 4 and 10]{SW} as well as \cite[Chapter 10]{SKM}.

A \textbf{tessellation} $T$ in $\RR^{d}$ is a countable, locally finite collection of $d$-dimensional polytopes, which cover the space and have non-empty, disjoint interiors. The elements of $T$ are called the {\bf cells} of $T$. Given a polytope $c\subset\RR^d$ we denote by $\cF_k(c)$ the set of its $k$-dimensional faces, $0\leq k\leq d$, where $\cF_d(c)=\{c\}$, and let $\cF(c):=\bigcup_{k=0}^{d}\cF_{k}(c)$. A tessellation $T$ is called {\bf face-to-face} if for any two of its cells $c_1,c_2\in T$ one has that
$$
c_1\cap c_2\in(\cF(c_1)\cap\cF(c_2))\cup\{\varnothing\},
$$
that is, the intersection of two cells is either empty or a common face of both cells.
For a face-to-face tessellation $T$ one defines $\cF_k(T)=\bigcup_{c\in T}\cF_{k}(c)$ and $\cF(T)=\bigcup_{c\in T}\cF(c)$.
A face-to-face tessellation in $\RR^{d}$ is called {\bf normal} if each $k$-dimensional face of the tessellation is contained in the boundary of precisely $d+1-k$ cells, for all $k\in\{0,1,\ldots,d-1\}$. 

We denote by $\TT$ the set of all face-to-face tessellations in $\RR^d$, which is supplied with a measurable structure as in \cite[Chapter 10]{SW}. By a \textbf{random tessellation} we understand a particle process $\cT$ in $\RR^d$ (in the usual sense of stochastic geometry, see \cite[Section 4.1]{SW}) satisfying $\supp\cT\in\TT$ almost surely. It is convenient to identify the random point process $\cT$ with its support. A random tessellation is \textbf{stationary}, provided that its distribution is invariant under all shifts in $\RR^d$ and \textbf{isotropic} if its distribution is invariant under all rotations in $\RR^d$. For a stationary random tessellation $\cT$ and $k\in\{0,1,\ldots,d-1\}$ we define the stationary particle process $\cT^{(k)}:=\sum_{F\in \cF_k(\cT)} \delta_{F}$ of $k$-dimensional polytopes, which is referred to as the \textbf{process of $k$-faces}.

Next, we recall the concept of a typical cell (and a typical $k$-face) of a stationary random tessellation $\cT$; see \cite[Section 4.1,4.2]{SW}, \cite[page 450]{SW}, \cite[Section 4.3]{SWGerman} for more details. Let $\cC'$ be the space of non-empty compact subsets of $\RR^d$ endowed with the Hausdorff metric. A \textbf{centre function} is a Borel function $z: \cC' \to \RR^d$ such that $z(C+m) = z(C) + m$ for all $C\in \cC'$ and $m\in \RR^d$. The \textbf{intensity of $k$-faces} of $\cT$  is defined by
$$
\gamma_k(\cT) := \EE \sum_{F\in \cF_k(\cT)} {\bf 1}(z(F) \in [0,1]^{d}),
\qquad
k=0,\ldots, d.
$$
These quantities  are known to be independent of the choice of the centre function $z$. Assuming that $\gamma_k(\cT)\in(0,\infty)$,  the \textbf{typical $k$-face of $\cT$} with respect to the centre function $z$ is the $k$-dimensional random polytope whose distribution is given by
$$
\PP^{z}_{\cT,k}(\,\cdot\,) := {1\over\gamma_k(\cT)} \, \EE\sum_{F\in \cF_k(\cT)} {\bf 1}({F - z(F)} \in\,\cdot\,){\bf 1} (z(F) \in [0,1]^{d}).
$$
In particular, for $k=d$ we get the concept of the \textbf{typical cell} of $\cT$.  It should be noted that translation-invariant characteristics of the distribution of the typical $k$-face do not depend on the choice of $z$. More precisely, if $z$ and $z'$ are two centre functions, then $\PP^{z'}_{\cT, k}$ is the push-forward of $\PP_{\cT,k}^{z}$ under the map $F\mapsto F - z'(F)$.

\section{Construction of $\beta$-, $\beta'$- and Gaussian-Voronoi tessellations}\label{sec:betadefinition}

\subsection{General Laguerre tessellations}

In this section we only briefly recall some facts about Laguerre tessellations and refer the reader to \cite[Sections 3.2--3.4]{GKT20} and \cite[Sections 2.3 and 3.1]{GKT21} for further details.

We start by defining a general Laguerre tessellation. Given two points $v,w \in \RR^{d}$ and $h\in\RR$ we define the \textbf{power} of $w$ with respect to the pair $(v,h)$ as
\[
\pow (w,(v,h)):=\|w-v\|^2+h.
\]
In this situation, $h$ is referred to as the \textbf{weight} (or \textbf{height}) of the point $v$. Let $X$ be a countable set of marked points of the form $(v,h)\in\RR^{d}\times \RR$.
Then the \textbf{Laguerre cell} of $(v,h)\in X$ is the set
\[
C((v,h),X):=\{w\in\RR^{d}\colon \pow(w,(v,h))\leq \pow(w,(v',h'))\text{ for all }(v',h')\in X\}.
\]
The point $v$ is called the nucleus of the cell $C((v,h),X)$. Note that a  Laguerre cell may be empty and even if it is non-empty, it does not need to contain its nucleus. The collection of all non-empty Laguerre cells of $X$  is called the \textbf{Laguerre diagram}:
\[
\cL(X):=\{C((v,h),X)\colon (v,h)\in X, C((v,h),X)\neq\varnothing\}.
\]
In the special case when the heights $h$ of all points are the same (say, $h_0\in\RR$) the above definition leads to the classical \textbf{Voronoi cell}. More precisely, let $Y$ be a countable set of points in $\RR^d$ whose ``marked'' version $X$ is obtained by attaching a fixed weight $h_0$ to each point. Then the Voronoi cell of $v\in Y$ is
\[
V(v,Y)=C((v,h_0), X)=\{w\in\RR^{d}\colon \|w-v\|\leq \|w-v'\| \text{ for all }v'\in Y\}.
\]
The collection of the Voronoi cells of all $v\in Y$ is called the \textbf{Voronoi diagram} $\cV(Y)$. It should be mentioned that a Laguerre diagram is not necessarily a tessellation in $\RR^d$, at least as long as no additional assumptions on the geometric properties of the set $X$ are imposed. Such assumptions have been described in detail in \cite{Ldoc,LZ08,Sch93}. In the present article we are interested in random tessellations built on Poisson point processes. More precisely, we consider a Poisson point process $\xi$ in $\RR^{d}\times E$, where $E\subset \RR$ is a Borel set {(an interval)}, and the corresponding Laguerre diagram $\cL(\xi)$. Lemmas 1 and 2 in \cite{GKT20} (see also \cite[Lemma 2.1]{GKT21}) provide sufficient conditions on $\xi$ which ensure that, almost surely, $\cL(\xi)$ is a stationary random face-to-face normal tessellation in $\RR^d$. In the following we work under these conditions and remark that they are automatically satisfied in the three cases we consider from Section \ref{subsec:ThreeFamilies} on.

\subsection{Laguerre tessellations via paraboloid growth processes} \label{par_growth}

An alternative approach to the construction of Laguerre diagrams uses so-called paraboloid growth processes with overlaps (or simply paraboloid growth process), which were first introduced in \cite{CSY13,SY08} in order to study the asymptotic geometry of random polytopes; see also~\cite{CY_variance_smooth,CYGaussian,CY_var_scaling_random_poly,CY_perturbed}. In this section we briefly describe this rather useful construction and refer for more details to \cite[Section 3.1]{GKT21}.
Let
\[
\Pi_{\pm,x}:=\{(v',h')\in\RR^{d}\times\RR\colon h'=\pm\|v'-v\|^2+h\}
\]
be the upward ($+$) and downward ($-$) standard \textbf{paraboloids} with apex $x:=(v,h)\in\RR^d\times\RR$, denoted as $\apex\Pi_{\pm,x}:=x$. In case $(v,h)=(0,0)$ we simply write $\Pi_{\pm}=\Pi_{\pm,(0,0)}$. Given a set $A\subset \RR^d\times\RR$ we put
\begin{align*}
A^{\downarrow}:&=\{(v,h')\in\RR^{d}\times\RR\colon (v,h) \in A \text{ for some } h\ge h'\},\\
A^{\uparrow}:&=\{(v,h')\in\RR^{d}\times\RR\colon (v,h) \in A \text{ for some } h\leq h'\}.
\end{align*}

Following the definition from \cite{CSY13}, for a given Poisson point process $\xi$ in $\RR^d\times\RR$,
we introduce the \textbf{paraboloid growth process} $\Psi(\xi)$:
$$
\Psi(\xi):=\bigcup\limits_{x\in \xi}\Pi^{\uparrow}_{+,x}.
$$
It should be noted that, in typical situations, the majority of paraboloids will be completely covered by other paraboloids, implying that they do not "contribute" to the model and can thus be omitted without loosing any information about the set $\Psi(\xi)$. This leads to the definition of extreme points. A point $x\in\xi$ is called \textbf{extreme} in the paraboloid growth process $\Psi(\xi)$ if and only if its associated paraboloid is not fully covered by the paraboloids associated with other points of $\xi$, i.e., if
$$
\Pi^{\uparrow}_{+,x}\not\subset \bigcup\limits_{y\in \xi, y\neq x}\Pi^{\uparrow}_{+,y}.
$$
We denote by $\ext(\Psi(\xi))$ the set of all extreme points of the paraboloid growth process $\Psi(\xi)$.
Using the paraboloid growth process we can construct a random diagram in $\RR^{d}$. Given a point $x=(v,h)\in \xi$ define the \textbf{$\Psi$-cell} of $x$ as
$$
C_{\Psi}(x,\xi):=\begin{cases}
\{w\in \RR^{d}\colon \big[(w,0)^{\uparrow}\cup(w,0)^{\downarrow}\big] \cap \bd  \Psi(\xi) \in \Pi_{+,x}\},\qquad &\text{if } x\in\ext(\Psi(\xi)),\\
\varnothing, &\text{otherwise},
\end{cases}
$$
where $\bd A$ denotes the boundary of a set $A$.
In other words, $w$ belongs to $C_{\Psi}(x,\xi)$ if and only if $\|w-v\|^2+h\leq \|w-v'\|+h'$ for all $(v',h')\in\xi$. Thus, the $\Psi$-cell of an extreme point $x$ of the paraboloid growth process $\Psi(\xi)$ is non-empty and coincides with the Laguerre cell $C(x, \xi)$. Next, we construct the diagram $\cL_{\Psi}(\xi)$ as the collection of all non-empty $\Psi$-cells:
$$
\cL_{\Psi}(\xi):=\{C_{\Psi}(x,\xi)\colon C_{\Psi}(x,\xi)\neq\varnothing\}=\{C_{\Psi}(x,\xi)\colon x\in\ext(\Psi(\xi))\}.
$$
We directly have that $\cL_{\Psi}(\xi) = \cL(\xi)$.

\subsection{Three families of random tessellations}\label{subsec:ThreeFamilies}

In this article we consider random tessellations in $\RR^d$ build on the following three families of Poisson point processes. For $\beta>-1$ and $0<\gamma<\infty$ we consider a Poisson point process $\eta_{d,\beta,\gamma}$ in $\RR^{d}\times [0,+\infty)$  whose intensity measure has density
\begin{equation}\label{eq:BetaPoissonIntensity}
(v,h)\mapsto \gamma\,c_{d+1,\beta}h^{\beta},\qquad c_{d+1,\beta}:={\Gamma\left({d+1\over 2}+\beta+1\right)\over \pi^{d+1\over 2}\Gamma(\beta+1)},
\end{equation}
with respect to the Lebesgue measure on $\RR^{d}\times [0,+\infty)$. Further, for $\beta > \frac{d}2 + 1$ and $0<\gamma<\infty$ we consider a Poisson point process $\eta'_{d,\beta,\gamma}$ in $\RR^{d}\times (-\infty,0)$ with intensity measure having density
\begin{equation}\label{eq:BetaPrimePoissonIntensity}
	(v,h)\mapsto \gamma\,c'_{d+1,\beta}(-h)^{-\beta},\qquad c'_{d+1,\beta}:={\Gamma\left(\beta\right)\over \pi^{d+1\over 2}\Gamma(\beta-{d+1\over 2})},
\end{equation}
with respect to the Lebesgue measure on $\RR^{d}\times (-\infty,0)$. The constants $c_{d+1,\beta}$ and $c'_{d+1,\beta}$ in the above definitions are introduced for convenience. For example, they make the statement of Theorem~\ref{tm:betaintersection} below more transparent.
Finally, for $\lambda>0$ and $0<\gamma<\infty$ we consider a Poisson point process $\zeta_{d,\lambda, \gamma}$ in $\RR^{d}\times\RR$ whose intensity measure has density
$$
(v,h)\mapsto \gamma\,e^{\lambda h},
$$
with respect to the Lebesgue measure on $\RR^{d}\times\RR$. It was shown in \cite[Lemma 3]{GKT20} and in \cite[Section 3.3]{GKT21} that the Poisson point processes $\eta_{d,\beta,\gamma}$ and $\zeta_{d,\lambda, \gamma}$ satisfy the sufficient conditions of Lemma 1 and Lemma 2 in \cite{GKT20} and, hence, the corresponding Laguerre diagrams $\cV_{d,\beta,\gamma}:=\cL(\eta_{d,\beta,\gamma})$, $\cV'_{d,\beta,\gamma}:=\cL(\eta'_{d,\beta,\gamma})$ and $\cG_{d,\lambda}:=\cL(\zeta_{d,\lambda,\gamma})$ are stationary random normal tessellations in $\RR^d$, which are called \textbf{$\beta$-Voronoi}, \textbf{$\beta'$-Voronoi} and \textbf{Gaussian-Voronoi tessellations}, respectively. These tessellations have been studied in~\cite{GKT21b,GKT20,GKT21,GKT21a}, where they were considered in  $\RR^{d-1}$ instead of $\RR^d$. Simulations of these tessellations in the plane are shown in Figure \ref{fig:Simulations}. {Note that although the point process $\eta'_{d,\beta,\gamma}$ is well-defined in the range  $\beta>\frac{d+1}{2}$, the corresponding $\beta'$-Voronoi tessellation exists in the smaller range $\beta > \frac d2 + 1$ only, see~\cite[Lemma~3]{GKT20}.}

\begin{figure}[t]
\centering
\includegraphics[width=0.3\columnwidth]{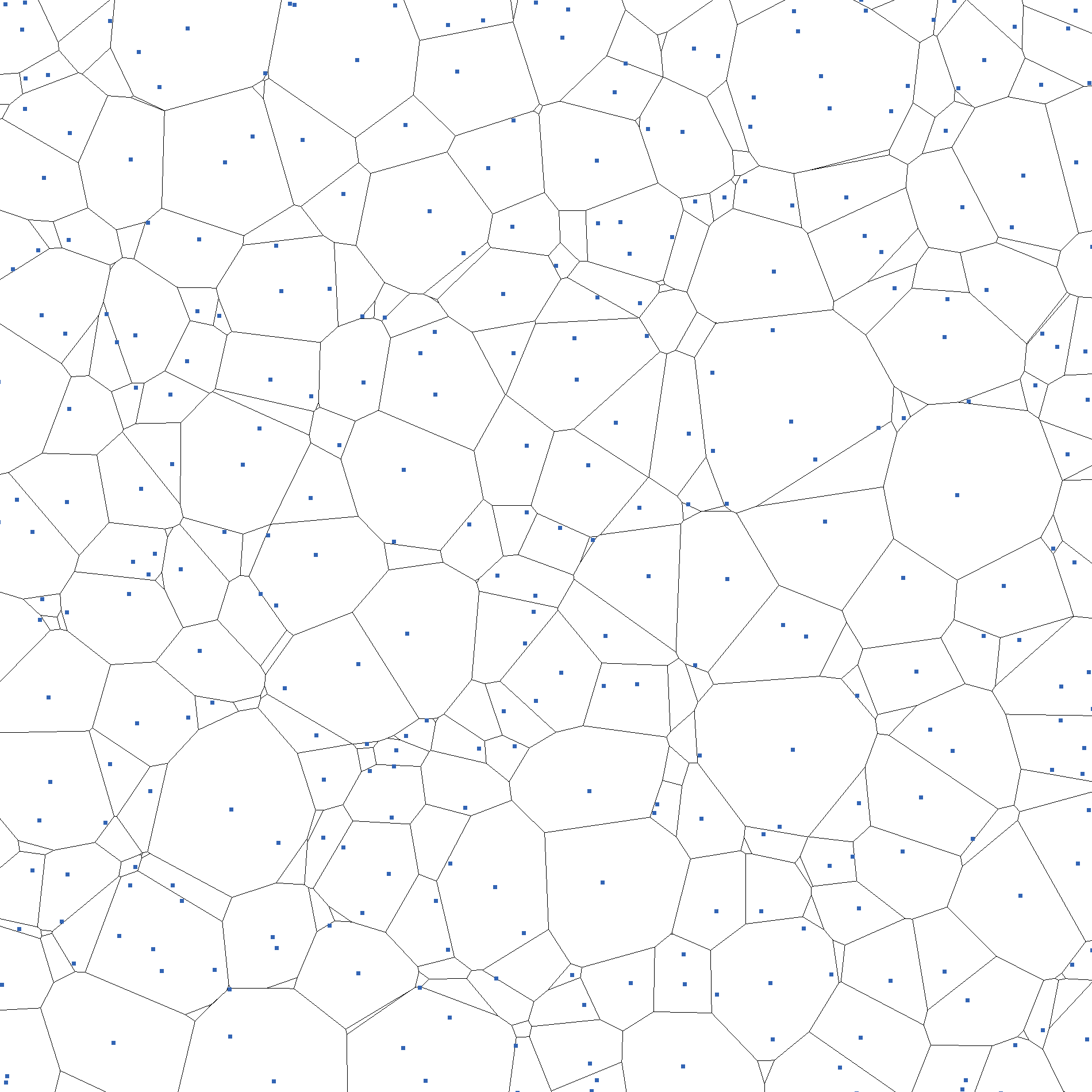}
\quad
\includegraphics[width=0.3\columnwidth]{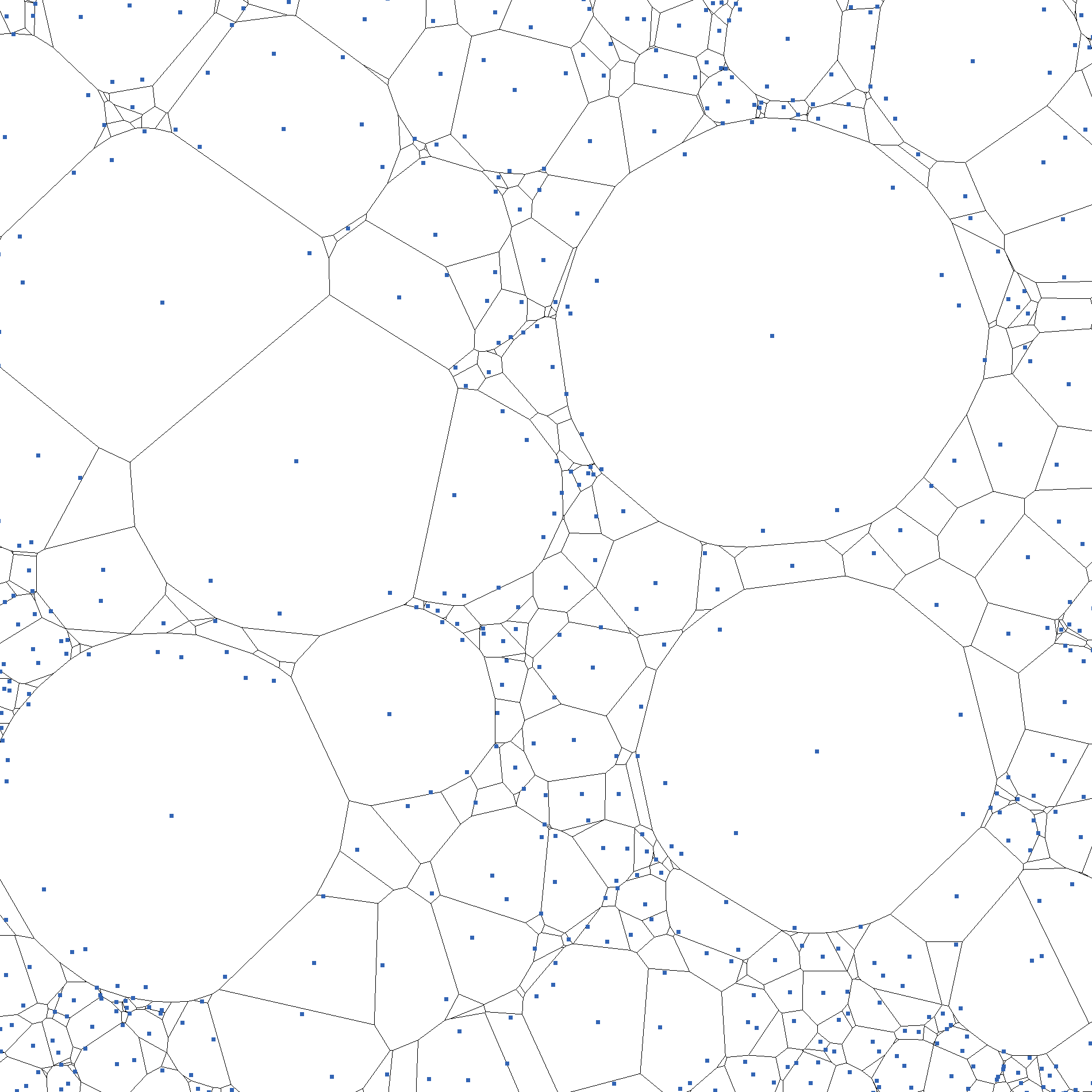}
\quad
\includegraphics[width=0.3\columnwidth]{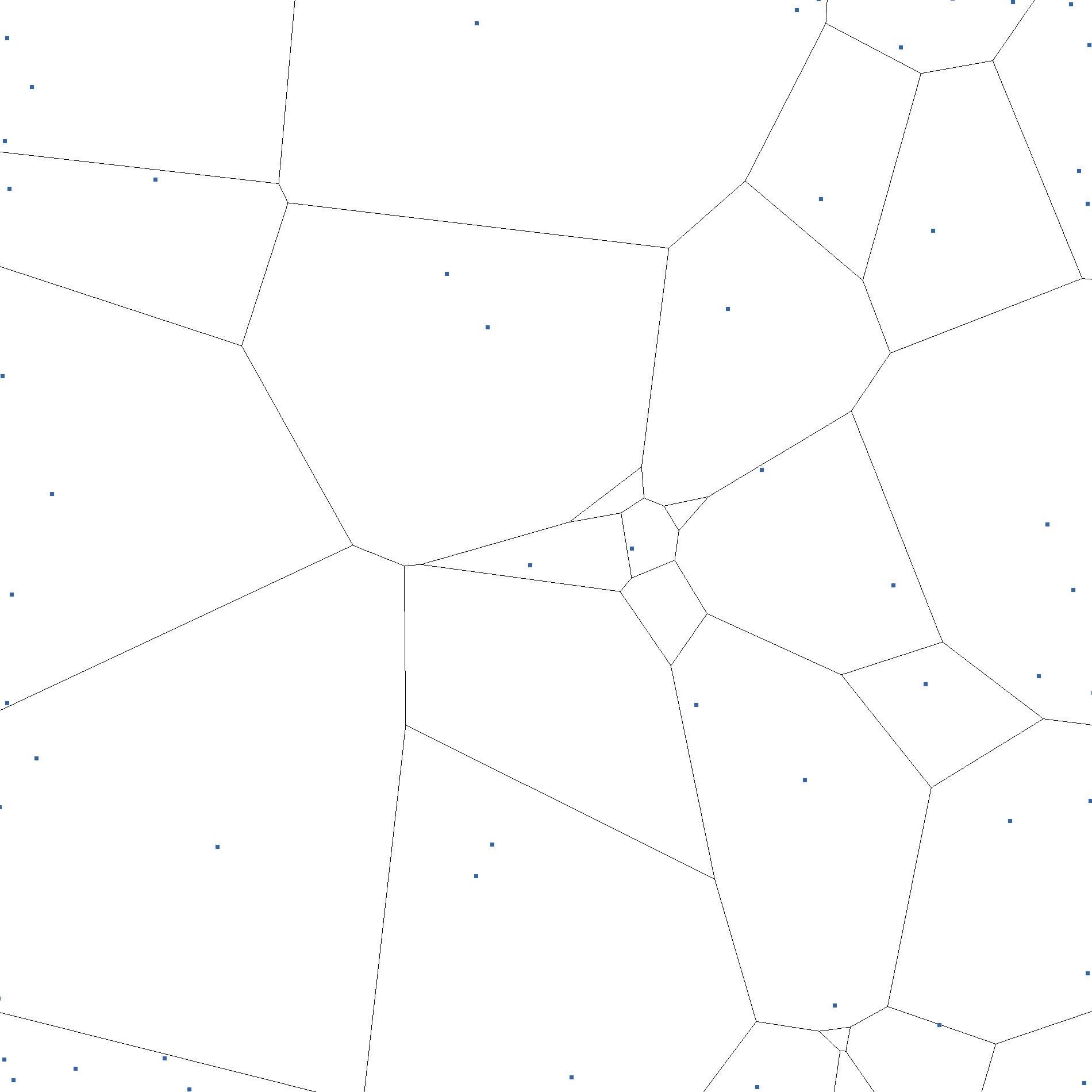}
\caption{Left panel: Simulation of a $\beta$-Voronoi tessellation in $\RR^2$ with $\beta=5$. Middle panel: Simulation of a $\beta'$-Voronoi tessellation in $\RR^2$ with $\beta=2.5$. Right panel: Simulation of a Gaussian-Voronoi tessellation in $\RR^2$.} \label{fig:Simulations}
\end{figure}

\begin{remark}
Note that changing the parameter $\gamma$ amounts to shifting the Poisson point process $\zeta_{d,\lambda, \gamma}$ along the height coordinate $h$. In particular, the distribution of the point process $\cG_{d,\lambda}$ does not depend on the choice of $\gamma$, which is reflected in our notation.
\end{remark}

\begin{remark}\label{rem:convention_beta_-1}
It will be convenient to extend the above definition of the $\beta$-Voronoi tessellation to the case $\beta= -1$ (with arbitrary $\gamma>0$) by
defining $\cV_{d,-1,\gamma}:=\cW_{d, r(d) \gamma}$ to be the classical Poisson-Voronoi tessellation constructed on the homogeneous Poisson point process on $\RR^d$ with constant intensity $r(d) \gamma$, where
\begin{equation*}
r(d) := \Gamma\left({d+1\over 2}\right)\pi^{-{d+1\over 2}}.
\end{equation*}
A justification for this definition is given by the following proposition. For the necessary  background on point processes and weak convergence we refer to~\cite[Chapter~3]{resnick_book}.
\end{remark}

\begin{proposition}
As $\beta\downarrow -1$, the Poisson process $\eta_{d,\beta, \gamma}$ converges, weakly on the space of locally finite integer-valued measures on $\RR^d\times [0,\infty)$, to the Poisson point process whose intensity measure is the Lebesgue measure on $\RR^d \times \{0\}$ times $r(d) \gamma$.
\end{proposition}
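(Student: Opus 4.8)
The plan is to invoke the standard fact that for Poisson point processes weak convergence is implied by vague convergence of the associated intensity measures; see \cite[Proposition~3.19]{resnick_book}. Since $\RR^d\times[0,\infty)$ is locally compact with a countable base, it therefore suffices to show that the intensity measure $\mu_{\beta,\gamma}$ of $\eta_{d,\beta,\gamma}$, which by \eqref{eq:BetaPoissonIntensity} has density $(v,h)\mapsto\gamma\,c_{d+1,\beta}h^\beta$ with respect to Lebesgue measure, converges vaguely, as $\beta\downarrow-1$, to $r(d)\gamma$ times the product of Lebesgue measure on $\RR^d$ and the Dirac measure $\delta_0$ in the height coordinate. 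One checks at the outset that both the approximating measures (as $\beta>-1$ makes $h^\beta$ integrable near $0$) and the limit measure are Radon on $\RR^d\times[0,\infty)$, so the cited criterion applies.

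Because $\mu_{\beta,\gamma}$ factorizes as Lebesgue measure in $v$ times the one-dimensional measure $\nu_\beta(\dd h):=c_{d+1,\beta}\,h^\beta\,\dd h$ on $[0,\infty)$, Fubini reduces the claim to proving that $\nu_\beta\to r(d)\,\delta_0$ vaguely on $[0,\infty)$, the $v$-integration being handled afterwards by dominated convergence using that the test functions have compact support. Concretely, for a continuous $g$ supported in $[0,H]$ I would split $\int_0^\infty g\,\dd\nu_\beta$ into the contributions of $[0,\eps]$ and $[\eps,H]$ and treat the two pieces separately.

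The crux is the mass computation. Using $\Gamma(\beta+1)(\beta+1)=\Gamma(\beta+2)$ one finds, for every fixed $H>0$,
\begin{equation*}
\nu_\beta([0,H])=c_{d+1,\beta}\,\frac{H^{\beta+1}}{\beta+1}=\frac{\Gamma\!\left(\tfrac{d+1}{2}+\beta+1\right)}{\pi^{(d+1)/2}\,\Gamma(\beta+2)}\,H^{\beta+1}\xrightarrow[\beta\downarrow-1]{}\frac{\Gamma\!\left(\tfrac{d+1}{2}\right)}{\pi^{(d+1)/2}}=r(d),
\end{equation*}
since $H^{\beta+1}\to1$ and $\Gamma(\beta+2)\to\Gamma(1)=1$. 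At the same time $c_{d+1,\beta}\to0$ because $\Gamma(\beta+1)\to+\infty$, so the tail contribution $\int_\eps^H g\,\dd\nu_\beta$, bounded by $\|g\|_\infty\,c_{d+1,\beta}\int_\eps^H h^\beta\,\dd h$ with the integral staying bounded (it tends to $\log(H/\eps)$), vanishes in the limit. On $[0,\eps]$ I would replace $g$ by $g(0)$ at the cost of $\sup_{[0,\eps]}|g-g(0)|\cdot\nu_\beta([0,\eps])$; since $\nu_\beta([0,\eps])\to r(d)$ remains bounded and $g$ is continuous at $0$, letting first $\beta\downarrow-1$ and then $\eps\downarrow0$ yields $\int_0^\infty g\,\dd\nu_\beta\to r(d)\,g(0)$, which is the desired vague convergence.

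The main obstacle is precisely the competition between two degenerations: as $\beta\downarrow-1$ the factor $h^\beta$ develops a non-integrable singularity at $h=0$, while the normalizing constant $c_{d+1,\beta}$ tends to $0$. The content of the proposition is that these effects are balanced exactly so that the mass neither escapes to infinity nor blows up, but concentrates on the floor $\{h=0\}$ with total weight $r(d)$; the identity $\Gamma(\beta+2)=(\beta+1)\Gamma(\beta+1)$ is what renders this balance transparent and recovers the intensity $r(d)\gamma$ of Remark~\ref{rem:convention_beta_-1}.
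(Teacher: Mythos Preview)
Your proof is correct and follows essentially the same approach as the paper: both reduce the problem to vague convergence of the intensity measures and then invoke \cite[Proposition~3.19]{resnick_book} (the paper also cites Kallenberg), and both establish this vague convergence by computing the mass $\nu_\beta([0,H])\to r(d)$ via the Gamma-function identity while observing that $c_{d+1,\beta}\to 0$ kills any contribution away from $h=0$. The only cosmetic difference is that the paper checks convergence directly on boxes $B\times[0,c]$ and $B\times[c_1,c_2]$, whereas you carry out the equivalent test-function argument with the $[0,\eps]$/$[\eps,H]$ split.
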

\begin{proof}
Write $\beta = -1 + \eps$ with $\eps\downarrow 0$. Then the constant appearing in~\eqref{eq:BetaPoissonIntensity} satisfies
$$
c_{d+1, \beta} ={\Gamma\left({d+1\over 2}+\eps\right)\over \pi^{d+1\over 2}\Gamma(\eps)} = {\Gamma\left({d+1\over 2}\right)\over \pi^{d+1\over 2}}\eps(1+o(1)) ,
$$
as $\eps\downarrow 0$.
It follows that for every $c>0$ and every bounded Borel set $B\subset \RR^d$ the number of points of the Poisson point process $\eta_{d,\beta,\gamma}$ appearing in  $B\times [0,c]$ is Poisson-distributed with expectation
$$
\gamma \lambda_d(B) c_{d+1, \beta}  \int_{0}^c h^{-1+\eps}\dd h \overset{}{\underset{\eps\downarrow 0}\longrightarrow}
\Gamma\left({d+1\over 2}\right) \pi^{- {{d+1}\over 2}} \gamma  \lambda_d(B) = r(d)\gamma\lambda_d(B),
$$
where $\lambda_d$ denotes the $d$-dimensional Lebesgue measure. Since the right-hand side does not depend on $c$, it follows that, for every $0< c_1 < c_2$, the expected number of points of $\eta_{d,\beta,\gamma}$ in $B\times [c_1,c_2]$ converges to $0$, as $\eps \downarrow 0$. Hence, the intensity measure of $\eta_{d,\beta, \gamma}$ converges as $\beta\downarrow -1$ to the Lebesgue measure on $\RR^d \times \{0\}$ times $r(d) \gamma$, vaguely on the space $\RR^d \times [0,\infty)$. Then, the claim of the proposition follows from~\cite[Theorem 16.16 (iv)]{Kallenberg} or~\cite[Propositions~3.6 and 3.19]{resnick_book}.
\end{proof}

\section{Affine sections of $\beta$, $\beta'$- and Gaussian-Voronoi tessellations}

In this section we study the intersection of the $d$-dimensional random tessellations $\cV_{d,\beta,\gamma}$, $\cV_{d,\beta,\gamma}'$ and $\cG_{d,\lambda}$  with an affine subspace $L\subset \RR^d$ of dimension $\ell\in\{1,\ldots,d-1\}$.
By stationarity and isotropy of these tessellations, we may and will assume without loss of generality that $L=\RR^\ell$ is the linear subspace of $\RR^d$ spanned by the first $\ell$ vectors of the standard orthonormal basis of $\RR^d$. The intersection of the tessellation $\cV_{d,\beta,\gamma}$ with $\RR^\ell$ will be denoted by $\cV_{d,\beta,\gamma}\cap \RR^\ell$. Similar convention is used for the tessellations $\cV'_{d,\beta,\gamma}$ and $\cG_{d,\lambda}$.
The following theorem identifies the distribution of $\cV_{d,\beta,\gamma}\cap \RR^\ell$, $\cV'_{d,\beta,\gamma}\cap \RR^\ell$ and $\cG_{d,\lambda}\cap \RR^\ell$.

\begin{theorem}\label{tm:betaintersection}
Fix integers $d\geq 2$ and $1\leq\ell\leq d-1$.
\begin{itemize}
	\item[(i)] For any $\beta\ge -1$ and $\gamma>0$, $\cV_{d,\beta,\gamma}\cap \RR^\ell$  has the same distribution as $\cV_{\ell,\beta+{d-\ell\over 2},\gamma}$.
	\item[(ii)] For any  $\beta>\frac{d}2 + 1$ and $\gamma>0$, $\cV'_{d,\beta,\gamma}\cap \RR^\ell$  has the same distribution as $\cV'_{\ell,\beta-{d-\ell\over 2},\gamma}$.
	\item[(iii)] {For any $\lambda>0$,} $\cG_{d,\lambda}\cap \RR^\ell$  has the same distribution as $\cG_{\ell,\lambda}$.
\end{itemize}
\end{theorem}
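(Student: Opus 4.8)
The plan is to reduce all three statements to a single computation at the level of the generating Poisson processes, using that sectioning a Laguerre diagram is itself a Laguerre operation. Decompose a nucleus $v\in\RR^d$ orthogonally as $v=\proj_{\RR^\ell}v+v^\perp$ with $v^\perp\perp\RR^\ell$. For every test point $w\in\RR^\ell$ the Pythagorean identity gives
\[
\pow(w,(v,h))=\|w-\proj_{\RR^\ell}v\|^2+\bigl(h+\|v^\perp\|^2\bigr),
\]
so the power of $w$ with respect to $(v,h)$ in $\RR^d$ equals its power with respect to $(\proj_{\RR^\ell}v,\;h+\|v^\perp\|^2)$ computed in $\RR^\ell$. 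Hence the trace on $\RR^\ell$ of the $d$-dimensional Laguerre diagram $\cL(\xi)$ is exactly the $\ell$-dimensional Laguerre diagram generated by the image of $\xi$ under the map $\Phi\colon (v,h)\mapsto(\proj_{\RR^\ell}v,\;h+\|v^\perp\|^2)$. This reduction is model-independent and leaves us to identify the distribution of $\Phi(\xi)$ for each driving process.

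Next I would compute the intensity measure of $\Phi(\xi)$ by the mapping theorem, integrating out the $d-\ell$ orthogonal coordinates $v^\perp$ at a fixed value $s=h+\|v^\perp\|^2$ of the new height; in polar coordinates on $\RR^{d-\ell}$ the inner integral is a routine Beta integral. For the $\beta$-process this produces, on $\RR^\ell\times[0,\infty)$, a density proportional to $s^{\beta+(d-\ell)/2}$, and the point is that the constant $c_{d+1,\beta}$ was calibrated precisely so that the resulting product of Gamma factors telescopes into $\gamma\,c_{\ell+1,\beta+(d-\ell)/2}$, the constant of $\eta_{\ell,\beta+(d-\ell)/2,\gamma}$. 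The same computation with the $\beta'$-density yields, on the region $s<0$, the density $\gamma\,c'_{\ell+1,\beta-(d-\ell)/2}(-s)^{-(\beta-(d-\ell)/2)}$, the Beta integral converging because $\beta>(d-\ell)/2$; and the Gaussian density $\gamma e^{\lambda h}$ maps to $\gamma(\pi/\lambda)^{(d-\ell)/2}e^{\lambda s}$ on all of $\RR^\ell\times\RR$.

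The Gaussian and $\beta$ cases then follow immediately. In the Gaussian case $\Phi(\xi)$ is again exponential with the \emph{same} rate $\lambda$, only $\gamma$ changing; since the Gaussian-Voronoi tessellation does not depend on $\gamma$, this gives $\cG_{d,\lambda}\cap\RR^\ell\overset{d}{=}\cG_{\ell,\lambda}$. In the $\beta$ case, $s=h+\|v^\perp\|^2\ge0$ throughout, the image process is locally finite and has exactly the intensity of $\eta_{\ell,\beta+(d-\ell)/2,\gamma}$, so the two Poisson processes agree in law and hence so do the tessellations; the boundary value $\beta=-1$ is recovered either directly — the pushforward of a constant-height process has height density $\propto(s-h_0)^{(d-\ell)/2-1}$, matching $\beta+(d-\ell)/2=(d-\ell)/2-1$ — or through the weak-convergence convention of Remark~\ref{rem:convention_beta_-1}.

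The genuine obstacle is the $\beta'$ case. Here $s=h+\|v^\perp\|^2$ need not be negative, and $\Phi(\xi)$ is \emph{not} locally finite near $\{s=0\}$: points with $h\uparrow0$ and $\|v^\perp\|^2>-h$ are pushed to positive heights, creating infinite intensity for $s>0$, so the mapping theorem cannot be applied globally. The resolution I would pursue is to show that these positive-height points almost surely contribute no cell, i.e.\ are almost surely not extreme for the $\ell$-dimensional paraboloid growth process $\Psi(\Phi(\xi))$; since adding a point with empty cell does not shrink any other cell, the sectional diagram then coincides with the Laguerre diagram of the locally finite restriction $\Phi(\xi)|_{\{s<0\}}$, which is distributed as $\eta'_{\ell,\beta-(d-\ell)/2,\gamma}$ by the computation above. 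Concretely, passing to the lifted points $(-2u,\|u\|^2+s)$, a point is extreme iff its lift lies on the lower convex hull; a point with $s>0$ lifts to height $\|u\|^2+s$, while almost surely there are $\ell+1$ points of the $\{s<0\}$ part with nuclei forming a simplex around $u$ and moderately negative heights, whose lifts all lie strictly below $\|u\|^2+s$, forcing the point above the hull. Making this quantitative and uniform over the (countably many, accumulating) positive-height points via the Mecke equation — showing the Palm probability of a nonempty cell vanishes for $s\ge0$, so the expected number of contributing positive-height points is zero — is where the main work lies, and I expect it to use the same estimates that establish existence of the $\beta'$-Voronoi tessellation in \cite{GKT20}; the admissibility of the new parameter, $\beta-(d-\ell)/2>\ell/2+1$, is exactly the standing hypothesis $\beta>d/2+1$.
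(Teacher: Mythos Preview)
Your approach is essentially the paper's: the map $\Phi$ is exactly the paper's map $f$, and your intensity computations match. The paper does the codimension-one case and iterates, whereas you integrate out all $d-\ell$ orthogonal coordinates at once; this is cosmetic.

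The only substantive difference is your treatment of the $\beta'$ obstacle, which is correct in spirit but far heavier than needed. You propose Mecke/Palm estimates to show that each positive-height image point a.s.\ has empty cell. The paper disposes of this in one line: the negative-height part $\Phi(\xi)|_{\{s<0\}}$ already has the law of $\eta'_{\ell,\beta-(d-\ell)/2,\gamma}$, and since its intensity density $(-s)^{-(\beta-(d-\ell)/2)}$ blows up as $s\uparrow 0$, \emph{every} point of $\RR^\ell\times\{0\}$ is a.s.\ an accumulation point of this process. Hence for each $w\in\RR^\ell$ one can find atoms $(u',s')$ with $u'\to w$ and $s'\uparrow 0$, forcing $\inf_{(u',s')}\pow(w,(u',s'))\le 0$; the lower boundary of the paraboloid growth process therefore sits in $\{s\le 0\}$, and any point with $s>0$ is automatically non-extreme. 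No Mecke formula, no quantitative control over the accumulating positive-height points, is required. Your route would work, but recognising this accumulation-point argument saves all the effort you flagged as ``where the main work lies.''
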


Before we move on to the proof of Theorem \ref{tm:betaintersection} we would like to highlight the following special case which deals with sections of the classical Poisson-Voronoi tessellation. Together with the results we obtain below, this fully answers and resolves the problems raised in \cite{ChiuEtAlSection,MilesSection,NagelSurvey,SKM}.

\begin{corollary}\label{cor:VoronoiIntersection}
Fix integers $d\geq 2$ and $1\leq \ell\leq d-1$. Then for any $\rho>0$ the intersection of the $d$-dimensional Poisson-Voronoi tessellation $\cW_{d,\rho}$ of intensity $\rho$ with $\RR^\ell$ has the same distribution as
$$
\cV_{\ell,{d-\ell\over 2}-1,  \pi^{{d+1\over 2}} \rho/\Gamma\left({d+1\over 2}\right)}.
$$
\end{corollary}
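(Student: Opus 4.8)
The plan is to derive the corollary directly from Theorem~\ref{tm:betaintersection}(i) by realizing the classical Poisson-Voronoi tessellation $\cW_{d,\rho}$ as a $\beta$-Voronoi tessellation at the boundary value $\beta=-1$. The starting point is the convention recorded in Remark~\ref{rem:convention_beta_-1}, according to which $\cV_{d,-1,\gamma}=\cW_{d,r(d)\gamma}$ with $r(d)=\Gamma\!\left({d+1\over 2}\right)\pi^{-{d+1\over 2}}$. First I would match intensities: to obtain $\cW_{d,\rho}$ on the left I solve $r(d)\gamma=\rho$, which gives
$$
\gamma=\frac{\rho}{r(d)}=\frac{\pi^{{d+1\over 2}}\rho}{\Gamma\!\left({d+1\over 2}\right)},
$$
and with this choice of $\gamma$ one has the identity in distribution $\cW_{d,\rho}=\cV_{d,-1,\gamma}$.

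Next I would simply invoke Theorem~\ref{tm:betaintersection}(i), which is stated for \emph{all} $\beta\ge -1$ and in particular is valid at the endpoint $\beta=-1$. It yields
$$
\cV_{d,-1,\gamma}\cap\RR^\ell \;\stackrel{d}{=}\; \cV_{\ell,\,-1+{d-\ell\over 2},\,\gamma}.
$$
Since $-1+{d-\ell\over 2}={d-\ell\over 2}-1$ and the parameter $\gamma$ is carried through the section unchanged, the right-hand side is exactly $\cV_{\ell,\,{d-\ell\over 2}-1,\,\pi^{(d+1)/2}\rho/\Gamma((d+1)/2)}$. Chaining the two displayed identities gives the assertion of the corollary.

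The argument is essentially a substitution, so there is no genuine analytic obstacle once Theorem~\ref{tm:betaintersection} is available; all the substance lives in the theorem, and the corollary only specializes it and rewrites the parameters. The sole point deserving care is the legitimacy of evaluating part~(i) at the boundary $\beta=-1$, where the defining Poisson process degenerates onto the hyperplane $\RR^d\times\{0\}$. Because the theorem is phrased to include $\beta=-1$ via the convention $\cV_{d,-1,\gamma}:=\cW_{d,r(d)\gamma}$, this is already subsumed; should one wish to justify the endpoint independently, the mechanism is the Proposition preceding the corollary, which establishes the weak convergence $\eta_{d,\beta,\gamma}\to$ (the Poisson process of intensity $r(d)\gamma\,\lambda_d$ on $\RR^d\times\{0\}$) as $\beta\downarrow-1$, and one would push this convergence through the continuous intersection map to recover the boundary case.
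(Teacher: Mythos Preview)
Your argument is correct and matches the paper's intent: the corollary is an immediate specialization of Theorem~\ref{tm:betaintersection}(i) at $\beta=-1$ via the identification $\cW_{d,\rho}=\cV_{d,-1,\gamma}$ from Remark~\ref{rem:convention_beta_-1}, and the paper gives no separate proof. Your final paragraph about justifying the endpoint by weak convergence is unnecessary, since the proof of Theorem~\ref{tm:betaintersection} treats the case $\beta=-1$ directly (computing the intensity measure of $f(\eta_{d,-1,\gamma})$ by hand) rather than by a limiting argument.
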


\begin{proof}[Proof of Theorem~\ref{tm:betaintersection}]
Let us first consider the case $\ell=d-1$ meaning that we intersect with a hyperplane. Let $\xi$ be one of the Poisson point processes $\eta_{d,\beta,\gamma}$, $\eta'_{d,\beta,\gamma}$ or $\zeta_{d,\lambda,\gamma}$.
The atoms of  $\xi$ live in the space $\RR^{d+1} = \RR^d \times \RR$;  a generic point in this space is denoted by  $(v, h)$ with $v=(v_1,\ldots,v_d)\in \RR^d$ being the spatial coordinate and $h\in \RR$ being the height coordinate. The Laguerre tessellation $\cL(\xi)$ lives in  the space $\RR^d$ defined by the equation $h=0$.  The linear hyperplane $L\equiv \RR^{d-1}\subset \RR^d \subset \RR^{d+1}$ with which our tessellations are intersected is given by the equations $\{v_d=0, h=0\}$.

In order to prove the statement we will use the representation of the Laguerre tessellation $\cL(\xi)$ in terms of the paraboloid growth process $\Psi(\xi)$ as described in Section \ref{par_growth}.
We extend the hyperplane $L \subset \RR^{d}$ by adding the height  coordinate, namely we consider
$$
L':=\{(v,h)=(v_1,\ldots,v_d,h)\in\RR^{d}\times \RR\colon v_d=0\}.
$$
For every point $\bx=(v,h)\in\RR^{d+1}$, the intersection of the $d$-dimensional paraboloid $\Pi_{+,\bx}$  with $L'$ is a $(d-1)$-dimensional upward paraboloid $\Pi_{+,\bx'}\cap L'$ in $L'$ with apex given by
$$
\bx'=f(v,h):=(v_1,\ldots, v_{d-1},0, h+v^2_d)\in L'.
$$
If $\bx$ runs through all atoms of $\xi$, then $\bx'$ runs through all atoms of the point process $f(\xi):=\{f(\bx)\colon \bx\in\xi\}$  on $L'$, which is also a Poisson point process by the mapping theorem; see~\cite[Theorem 5.1]{LP}.
The intersection of the Laguerre tessellation $\cL(\xi)$ with $L$ coincides with the Laguerre tessellation generated by the point process $f(\xi)$ (within $L$).

Now, we are going to identify the {intensity measure $\mu$} of the Poisson point process $f(\xi)$. To this end, we consider the Poisson point processes $\eta_{d,\beta,\gamma}$, $\eta'_{d,\beta,\gamma}$ and $\zeta_{d,\lambda,\gamma}$ separately.

\paragraph{Case (i).} Let first $\beta>-1$. By the mapping theorem for Poisson point processes \cite[Theorem 5.1]{LP}, $f(\eta_{d,\beta,\gamma}):=\{f(x)\colon x\in \eta_{d,\beta,\gamma} \}$ is a Poisson point process in $\RR^{d-1}\times [0,+\infty)\subset L'$.
To compute its intensity measure $\mu$, we take some Borel set $B\subset L$, any $s>0$ and observe that an atom $(v,h)$ of $\eta_{d, \beta,\gamma}$ is mapped  by $f$ to $B\times [0,s]$ if and only if $(v_1,\ldots, v_{d-1}) \in B$ and $h + v_d^2 \leq s$. The latter condition means that $h\leq s$ and $|v_d| \leq \sqrt{s-h}$.  It follows that the intensity measure $\mu$ satisfies
\begin{align}
	\mu(B\times [0,s])&=\gamma\,c_{d+1,\beta}\,\int_{\RR^{d}}\int_{0}^\infty h^{\beta}{\bf 1}(f(v,h)\in B\times [0,s])\,\dd h\dd v\notag\\
	&=\gamma\,c_{d+1,\beta}\,\int_{\RR^{d}}\int_{0}^s h^{\beta}{\bf 1}(v\in B\times [-\sqrt{s-h},\sqrt{s-h}])\,\dd h\dd v\notag\\
	&=2\gamma\,c_{d+1,\beta}\,\lambda_{d-1}(B)\int_{0}^s h^{\beta}\sqrt{s-h}\,\dd h\notag\\
	&=\gamma\,{\Gamma({d+1\over 2}+\beta+1)\over \pi^{d\over 2}\Gamma(\beta+{3\over 2})}\,\lambda_{d-1}(B){s^{\beta+{3\over 2}}\over \beta +{3\over 2}}.\label{eq:case_i_intensity_1}
\end{align}

In the case $\beta = -1$, we let $\eta_{d,-1,\gamma}$ be the Poisson point process on $\RR^d\times \{0\}$ (which is considered as a subset of $\RR^{d+1}$) whose intensity with respect to the Lebesgue measure on $\RR^d\times\{0\}$ is constant and equals  $\Gamma({d+1\over 2})\pi^{-{d+1\over 2}}\gamma$.  Thus, the heights of all points in $\eta_{d,-1,\gamma}$ are $0$. Then,  the Laguerre tessellation generated by $\eta_{d,-1,\gamma}$ on $\RR^d$ coincides with $\cV_{d,-1, \gamma}$ by our convention described in Remark~\ref{rem:convention_beta_-1}. To compute the intensity measure $\mu$ of the Poisson point process $f(\eta_{d,-1,\gamma})$, we take some Borel set $B\subset L$, any $s>0$ and observe that an atom $(v,0)$ of $\eta_{d,-1,\gamma}$ is mapped  by $f$ to $B\times [0,s]$ if and only if $(v_1,\ldots, v_{d-1}) \in B$ and $v_d^2 \leq s$. It follows that the intensity measure $\mu$ satisfies
\begin{align}
\mu(B\times [0,s])
&=
\gamma\,{\Gamma({d+1\over 2})\over \pi^{d+1\over 2}}\,\int_{\RR^{d}}{\bf 1}(f(v,0)\in B\times [0,s])\,\dd v\notag\\
&=
\gamma\,{2\Gamma({d+1\over 2})\over \pi^{d+1\over 2}}\,\lambda_{d-1}(B)\sqrt{s}.\label{eq:case_i_intensity_2}
\end{align}

By differentiating~\eqref{eq:case_i_intensity_1} and~\eqref{eq:case_i_intensity_2} with respect to $s$, it follows that for all $\beta\ge -1$,  the intensity measure of $f(\eta_{d,\beta,\gamma})$  has density
$$
(v',0, h)\mapsto \gamma\,c_{d,\beta+{1\over 2}}h^{\beta+{1\over 2}},
\qquad v'\in \RR^{d-1},\, h>0,
$$
with respect to the Lebesgue measure on $L\times [0,+\infty)$. Consequently, the Laguerre tessellation generated by $f(\eta_{d,\beta,\gamma})$ within $L$ has the same distribution as $\cV_{d-1,\beta+{1\over 2},\gamma}$.

\paragraph{Case (ii).} Next, we deal with $f(\eta'_{d,\beta,\gamma}):=\{f(x)\colon x\in \eta'_{d,\beta,\gamma}\}$.
Let us first consider only those points of $f(\eta'_{d,\beta,\gamma})$ that have negative height and compute the intensity measure $\mu$ of these points. The points with positive height coordinate have no influence on the resulting tessellation, as we will argue below.
To determine $\mu$, we take some Borel set $B\subset L$, any $s<0$ and observe that an atom $(v,-g)$ of $\eta'_{d, \beta,\gamma}$ (with $g>0$) is mapped  by $f$ to $B\times (-\infty, s]$ if and only if $(v_1,\ldots, v_{d-1}) \in B$ and $v_d^2 \leq s + g$. The latter condition means that $g\geq  - s$ and $|v_d| \leq \sqrt{s + g}$.  It follows that the intensity measure $\mu$ satisfies
\begin{align*}
	\mu(B\times (-\infty,s])&=\gamma\,c'_{d+1,\beta}\,\int_{\RR^{d}}\int_{0}^\infty g^{-\beta}{\bf 1}(f(v,-g)\in B\times [-s,\infty))\,\dd g\dd v\\
	&=\gamma\,c'_{d+1,\beta}\,\int_{\RR^{d}}\int_{-s}^\infty g^{-\beta}{\bf 1}(v\in B\times [-\sqrt{s+g},\sqrt{s+g}])\,\dd g\dd v\\
	&=2\gamma\,c'_{d+1,\beta}\,\lambda_{d-1}(B)\int_{-s}^\infty g^{-\beta}\sqrt{g+s}\,\dd g\\
	&=\gamma\,{\Gamma(\beta-{1\over 2})\over \pi^{d\over 2}\Gamma(\beta-{d+1\over 2})}\,\lambda_{d-1}(B){(-s)^{-\beta+{3\over 2}}\over \beta -{3\over 2}}.
\end{align*}
Differentiating, we conclude that the intensity measure $\mu$ has density
$$
(v',0, h)\mapsto \gamma\,c'_{d,\beta+{1\over 2}}(-h)^{-\beta+{1\over 2}},
\qquad v'\in \RR^{d-1},\, h < 0,
$$
with respect to the Lebesgue measure on $L\times (-\infty,0)$. So, by the mapping theorem~\cite[Theorem 5.1]{LP}, 
the restriction of $f(\eta'_{d,\beta,\gamma})$ to $L\times (-\infty,0)$ is a Poisson point process with the same intensity measure 
as $\eta'_{d-1, \beta - \frac 12, \gamma}$. The Laguerre tessellation generated by this Poisson point process 
within $L\equiv \RR^{d-1}$  has the same distribution as $\cV'_{d-1,\beta-{1\over 2},\gamma}$. It remains to observe that adding the points of 
$f(\eta'_{d,\beta,\gamma})$ with positive height coordinate does not change the Laguerre tessellation. Indeed, every point in $\RR^{d-1}\times \{0\}$
is an accumulation point of $\eta'_{d-1, \beta - \frac 12, \gamma}$, hence the lower boundary 
of the paraboloid growth process $\Psi(\eta'_{d-1, \beta - \frac 12, \gamma})$ is contained in $\RR^{d-1}\times (-\infty, 0]$ and points with positive height coordinate have no influence on the tessellation. 

\paragraph{Case (iii).} Finally, we consider $f(\zeta_{d, \lambda, \gamma})$, which is a Poisson point process in $L'$. To compute its intensity measure $\mu$, we take some Borel set $B\subset L$, any $s\in\RR$ and observe that an atom $(v,h)$ of $\zeta_{d,\lambda,\gamma}$ is mapped  by $f$ to $B\times (-\infty,s]$ if and only if $(v_1,\ldots, v_{d-1}) \in B$ and $h + v_d^2 \leq s$. It follows that
\begin{align*}
	\mu(B\times (-\infty,s])&=\gamma\,\int_{\RR^{d}}\int_\RR e^{\lambda h}{\bf 1}(f(v,h)\in B\times (-\infty,s))\,\dd h\dd v\\
	&=2\gamma\,\lambda_{d-1}(B)\int_{-\infty}^s e^{h\lambda}\sqrt{s-h}\, \dd h\\
	&=\gamma \,\lambda_{d-1}(B){\sqrt{\pi} e^{\lambda s}\over \lambda^{3/2}}.
\end{align*}
Thus, the density of the intensity measure of $f(\zeta_{d,\lambda,\gamma})$ is given by
$$
(v',0, h)\mapsto \gamma \sqrt{\pi/\lambda} \, e^{\lambda h}, \qquad v'\in \RR^{d-1},\, h\in \RR.
$$
Hence, the Laguerre tessellation generated by $f(\zeta_{d,\lambda,\gamma})$ within $L$ has the same distribution as $\cG_{d-1,\lambda}$ (recall that the parameter $\gamma$ does not influence the distribution of the Gaussian-Voronoi tessellation).

\vspace*{2mm}
This proves the claim for $\ell=d-1$. For general $1\leq \ell\leq d-2$ we can { inductively} repeat the above argument $d-\ell$ times.
\end{proof}

\section{Sectional Poisson-Voronoi tessellations}\label{sec:sectional_poisson_voronoi_formulas}
\subsection{Face intensities and the expected volume of the typical cell}\label{sec:sectional_poisson_voronoi_formulas1}
As we have shown in Corollary~\ref{cor:VoronoiIntersection}, the sectional Poisson-Voronoi tessellation $\cW_{d,\rho}\cap \RR^\ell$ can be identified with a suitable $\beta$-Voronoi tessellation. This makes it possible to compute explicitly several functionals of the sectional Poisson-Voronoi tessellation. We begin with a formula for the intensity of $j$-dimensional faces. This quantity, denoted by $\gamma_j(\cW_{d,\rho}\cap \RR^\ell)$, has been defined in Section~\ref{sec:preliminaries_tess}.

\begin{theorem}\label{tm:cellintencity}
Let $d\geq 2$, $1\leq \ell\leq d-1$ and $0\leq j\leq \ell$. Then, for any $\rho>0$,
we have
$$
\gamma_j(\cW_{d,\rho}\cap \RR^\ell)
= \rho^{{\ell\over d}}{2\mathbb J_{\ell+1,\ell-j+1}({d-\ell-1\over 2})\pi^{\ell\over 2}\over d(\ell+1)}{\Gamma({(\ell+1)(d-1)\over 2}+1)\Gamma(\ell+1-{\ell\over d})\Gamma({d\over 2}+1)^{\ell+1-{\ell\over d}}\over \Gamma({(\ell+1)(d-1)+1\over 2})\Gamma({\ell+2\over 2})\Gamma({d+1\over 2})^{\ell+1}},
$$
wher
\begin{align}
\mathbb J_{\ell+1,\ell-j+1}\left({d-\ell-1\over 2}\right)
&=
\binom {\ell+1}{j}\,\frac{ \Gamma(\frac{(d-1)(\ell+1)+3}{2}) }{  \sqrt \pi\, \Gamma (\frac{ (d-1)(\ell+1)}2+1)}\,\int_{-\infty}^{+\infty}(\cosh u)^{- (d-1)(\ell+1) - 2}\notag\\
&\hspace{3cm}\times\left(\frac 12  + \ii \frac{ \Gamma({{d+1}\over 2}) }{\sqrt \pi\, \Gamma ({d\over 2})}\,\int_0^u(\cosh v)^{d-1}\dd v \right)^{j} \dd u \label{eq:J_formula}
\end{align}
and $\ii=\sqrt{-1}$ stands for the imaginary unit.
\end{theorem}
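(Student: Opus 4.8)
The plan is to reduce the statement, via the distributional identification of Corollary~\ref{cor:VoronoiIntersection}, to a face-intensity computation for a single $\beta$-Voronoi tessellation, and then to invoke the face-intensity formulas for these tessellations established in \cite{GKT20,GKT21,GKT21a} together with the integral representation of the angle functional $\mathbb{J}$ from the beta-polytope literature \cite{kabluchko_formula,beta_polytopes}. By Corollary~\ref{cor:VoronoiIntersection}, $\cW_{d,\rho}\cap\RR^\ell$ has the same distribution as $\cV_{\ell,\beta,\gamma}$ with $\beta=\frac{d-\ell}{2}-1$ and $\gamma=\pi^{(d+1)/2}\rho/\Gamma(\frac{d+1}{2})$, and since $\gamma_j$ is a distributional invariant it suffices to compute $\gamma_j(\cV_{\ell,\beta,\gamma})$ for these values. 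First I would extract the dependence on $\rho$ by a scaling argument. The anisotropic dilation $(v,h)\mapsto(av,a^2h)$ preserves paraboloids and, by \eqref{eq:BetaPoissonIntensity}, turns $\eta_{\ell,\beta,\gamma}$ into $\eta_{\ell,\beta,\gamma a^{-(2\beta+\ell+2)}}$ while dilating the induced tessellation in $\RR^\ell$ by the factor $a$; hence $\gamma_j(\cV_{\ell,\beta,\gamma})=\gamma^{\ell/(2\beta+\ell+2)}\gamma_j(\cV_{\ell,\beta,1})$. For $\beta=\frac{d-\ell}{2}-1$ one has $2\beta+\ell+2=d$, so the exponent is $\ell/d$, and substituting the value of $\gamma$ produces the factor $\rho^{\ell/d}$ together with the accompanying powers of $\pi$ and $\Gamma(\frac{d+1}{2})$.

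It then remains to identify the scale-free constant $\gamma_j(\cV_{\ell,\beta,1})$. Here I would pass to the dual $\beta$-Delaunay tessellation: in a normal tessellation each $j$-face of $\cV_{\ell,\beta,\gamma}$ is the intersection of $\ell-j+1$ neighbouring cells and is therefore dual to an $(\ell-j)$-dimensional face of the Delaunay tessellation spanned by $\ell-j+1$ nuclei, so that $\gamma_j(\cV_{\ell,\beta,\gamma})$ equals the intensity of $(\ell-j)$-faces of the dual tessellation. By the angle-sum relation for stationary normal tessellations \cite{SW}, this intensity is the intensity of full-dimensional Delaunay cells times the expected sum of the normalised interior angles of the typical Delaunay cell at its $(\ell-j)$-faces. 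Since the typical $\beta$-Delaunay cell is, up to scale, a beta simplex, and it has $\binom{\ell+1}{\ell-j+1}=\binom{\ell+1}{j}$ faces of dimension $\ell-j$, this expected angle sum is precisely the functional $\mathbb{J}_{\ell+1,\ell-j+1}$ evaluated at the parameter of the relevant beta density. The multivariate Mecke equation \cite{LP} applied to $\eta_{\ell,\beta,1}$, combined with an affine Blaschke--Petkantschin decomposition separating the affine hull of the spanning points from their position within it, reduces the cell intensity to an explicit integral; the one-dimensional radial factor of this decomposition is what shifts the beta parameter to $\beta+\frac12=\frac{d-\ell-1}{2}$, exactly the argument occurring in \eqref{eq:J_formula}.

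Finally I would import the integral representation \eqref{eq:J_formula} of $\mathbb{J}_{\ell+1,\ell-j+1}(\frac{d-\ell-1}{2})$ from \cite{kabluchko_formula,beta_polytopes}, where interior-angle sums of beta simplices are expressed through the complex $\cosh$-integrals on its right-hand side, and collect all factors. The main obstacle is not conceptual but organisational: one must track the normalising constant $c_{\ell+1,\beta}$ from \eqref{eq:BetaPoissonIntensity}, the Jacobian of the Blaschke--Petkantschin change of variables, the simplex-volume moments of the beta distribution, the intensity of Delaunay cells, and the scaling factor $\gamma^{\ell/d}$, and then collapse the resulting product of Gamma functions into the compact form displayed in the theorem. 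The bookkeeping simplifies considerably because the parameters supplied by Corollary~\ref{cor:VoronoiIntersection} satisfy $2\beta+\ell+2=d$ and $\beta+\frac12=\frac{d-\ell-1}{2}$, which is what makes the exponent of $\rho$ equal to $\ell/d$ and aligns the beta parameter with the argument of $\mathbb{J}$ in \eqref{eq:J_formula}.
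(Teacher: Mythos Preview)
Your proposal is correct and follows the same route as the paper: reduce to a $\beta$-Voronoi tessellation via Corollary~\ref{cor:VoronoiIntersection} and then invoke the face-intensity machinery for $\beta$-Voronoi tessellations developed in \cite{GKT20}. The paper's own proof is in fact shorter than yours, because it treats that machinery as a black box: it cites \cite[Theorem~6]{GKT20} (which expresses the $j$-face intensity of $\cV_{\ell,\beta,\gamma}$ through the $\mathbb{J}$-functional), \cite[Theorem~2]{GKT20} (for the explicit constants), and \cite[Proposition~3]{GKT20} (for the integral representation of $\mathbb{J}$), with the appropriate parameter substitutions $d:=\ell+1$, $\beta:=\frac{d-\ell-2}{2}$, $\gamma:=r$. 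Your scaling argument, passage to the dual Delaunay tessellation, Mecke/Blaschke--Petkantschin decomposition, and explanation of the $\beta\mapsto\beta+\tfrac12$ shift are precisely the ingredients that go into proving those cited results in \cite{GKT20}, so you are unpacking what the paper simply quotes.
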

\begin{proof}
By Corollary \ref{cor:VoronoiIntersection} we have $\gamma_j(\cW_{d,\rho}\cap \RR^\ell)=\gamma_j(\cV_{\ell,{d-\ell-2\over 2},r})$, where $r=r(d)\rho =\pi^{d+1\over 2}\rho /\Gamma({d+1\over 2})$.  The formula for $\gamma_j(\cV_{\ell,{d-\ell-2\over 2},r})$ can be obtained by combining~\cite[Theorem 6]{GKT20} (which we apply with parameters $d := \ell+1$, $\beta:= {d-\ell-2\over 2}$ and $j:= \ell-j$) with~\cite[Proposition 3]{GKT20} (with parameters $d:= \ell+1$, $k:=\ell+1-j$).
Note that~\cite[Theorem 6]{GKT20} refers to~\cite[Theorem 2]{GKT20} which has to be applied with parameters $d:= \ell+1$,
$s:=1$, $\nu: = 0$, $\gamma: = r$.
\end{proof}

\begin{remark}\label{rm:Jfunction}
The quantities $\mathbb J_{d+1,k}(\beta)$ for general $d\geq 0$, $k\in \{1,\ldots,d+1\}$ and $\beta \ge -1$ have a natural  geometric meaning. Namely, $\mathbb J_{d+1,k}(\beta)$ is equal to the expected sum of internal angles at its $k$-vertex faces of a  random beta-simplex, which is defined  as the convex hull of $d+1$ independent random points with density proportional to $(1-\|x\|^2)^\beta$ in the $d$-dimensional unit ball, see \cite[Section 6.1]{GKT20} and \cite{kabluchko_formula} for details. From this interpretation it directly follows that
\begin{align*}
&\mathbb J_{1,1}(\beta) = \mathbb J_{2,1}(\beta)=\mathbb J_{2,2}(\beta)=1,\\
&\mathbb J_{3,1}(\beta)={1\over 2},\qquad \mathbb J_{3,2}(\beta)={3\over 2},\qquad \mathbb J_{3,3}(\beta)=1,\\
&\mathbb J_{\ell+1,\ell}(\beta)={\ell+1\over 2},\qquad \mathbb J_{\ell+1,\ell+1}(\beta)=1,
\end{align*}
for any $\ell\ge 1$ and $\beta\ge -1$. Moreover, if we denote by $\Sigma_d$ a regular $d$-dimensional simplex and by $\sigma_k(\Sigma_d)$ the internal angle sum at its $k$-vertex faces, then
$$
\mathbb J_{d+1,k}(\infty):=\lim_{\beta\to\infty}\mathbb J_{d+1,k}(\beta)=\sigma_k(\Sigma_d)
$$
according to \cite[Proposition 2]{GKT20}.
\end{remark}

{\setlength{\extrarowheight}{.5em}
\begin{table}[t]
		\centering
		\tiny{
		\begin{tabular}{c|c|c|c|c|c}
			& $d=2$ & $d=3$ & $d=4$ & $d=5$ & $d=6$\\
		\hline
		$\ell=1$ & $\frac{\pi}{4 \sqrt{\rho}}$ & $\frac{\sqrt[3]{3}}{\sqrt[3]{4 \pi \rho} \cdot \Gamma \left(\frac{5}{3}\right)}$ & $\frac{15 \pi ^{3/2}}{64\cdot  \sqrt[4]{8\rho}\cdot \Gamma \left(\frac{3}{4}\right)}$ & $\frac{7 \sqrt[5]{5}}{3 \cdot  \sqrt[5]{648 \pi^2 \rho}\cdot \Gamma \left(\frac{9}{5}\right)}$ & $\frac{2835\cdot  \sqrt[6]{3} \cdot \pi ^{3/2}}{16384\cdot  \sqrt[6]{32 \rho} \cdot \Gamma \left(\frac{5}{6}\right)}$\\
		$\ell=2$ & $-$  & $\frac{5\cdot \sqrt[3]{4}}{\sqrt[3]{3\pi^5\rho^{2}} \cdot  \Gamma \left(\frac{7}{3}\right)}$ & $\frac{24 \sqrt{2}}{35 \sqrt{\pi \rho}}$ & $\frac{77\cdot 2^{4/5}}{5\cdot 15^{3/5} \cdot \pi ^{9/5} \cdot\rho^{2/5} \cdot\Gamma \left(\frac{13}{5}\right)}$ & $\frac{50 \cdot \sqrt[3]{6}}{143 \cdot \sqrt[3]{\rho} \cdot \Gamma \left(\frac{8}{3}\right)}$\\
		$\ell=3$ & $-$ & $-$ & $\frac{280665 \cdot \pi ^{3/2}}{821248 \cdot \sqrt[4]{2} \cdot \rho^{3/4}\cdot  \Gamma \left(\frac{13}{4}\right)}$ & $\frac{56\cdot 15^{3/5} \cdot \sqrt[5]{2/\pi }}{187\cdot  \rho^{3/5} \cdot \Gamma \left(\frac{17}{5}\right)}$ & $\frac{17320875 \cdot \sqrt{3/2}\cdot \pi }{176201728 \cdot\sqrt{\rho}}$\\
		$\ell=4$ & $-$ & $-$ & $-$ & $\frac{144848704\cdot 2^{3/5}}{15^{6/5}  \pi ^{8/5} \left(1692197-141120 \pi ^2\right) \rho^{4/5} \Gamma \left(\frac{21}{5}\right)}$ & $\frac{15\cdot 6^{2/3}}{13 \cdot \rho^{2/3}\cdot \Gamma \left(\frac{13}{3}\right)}$ \\
		$\ell=5$ & $-$ & $-$ & $-$ & $-$ & $\frac{6823504578515625\cdot 3^{5/6} \cdot \pi ^{3/2}}{4912276871446528 \cdot\sqrt[6]{2} \cdot\rho^{5/6} \cdot \Gamma \left(\frac{31}{6}\right)}$
		\end{tabular}
\caption
{$\EE\vol(Z_{d,\ell, \rho})$ for small values of $d$ and $\ell$.}
}
\label{tab:Volume1}
\end{table}}

As a corollary of Theorem~\ref{tm:cellintencity} we can compute the expected volume of the typical cell $Z_{d,\ell, \rho}$ of the sectional Poisson-Voronoi tessellation $\cW_{d,\rho}\cap \RR^\ell$. Note that the volume does not change under shifts, which is why it does not matter how to choose the centre function in the definition of the typical cell.  For $\ell=1$ and $\ell=2$ this quantity has been studied by Miles~\cite{MilesSection} who showed that
\begin{align*}
\EE\vol(Z_{d,1,\rho})&=\rho^{-{1\over d}}{\Gamma(d-{1\over 2})\Gamma({d+1\over 2})^2\over (d-1)!\Gamma(2-{1\over d})\Gamma({d\over 2})\Gamma({d\over 2}+1)^{1-{1\over d}}},\\
\EE\vol(Z_{d,2,\rho})&=\rho^{-{2\over d}}{3d\cdot \Gamma({3d\over 2}-1)\Gamma({d+1\over 2})^3\over \pi\Gamma({3d-1\over 2})\Gamma(3-{2\over d})\Gamma({d\over 2}+1)^{3-{2\over d}}},
\end{align*}
see Formulas (4.1) and (4.4) in \cite{MilesSection}. Our result generalizes this to arbitrary $1\leq \ell\leq d-1$; special cases with small values of $d$ and $\ell$ are summarized in Table \ref{tab:Volume1}.

\begin{corollary}\label{cor:volumeintersection}
Let $\rho>0$. Then, for any $d\geq 2$ and $1\leq \ell\leq d-1$ we have
\begin{align*}
\EE\vol(Z_{d,\ell,\rho})
&=\rho^{-{\ell\over d}}{d(\ell+1)\over 2\mathbb J_{\ell+1,1}({d-\ell-1\over 2})\pi^{\ell\over 2}}{\Gamma({(\ell+1)(d-1)+1\over 2})\over \Gamma({(\ell+1)(d-1)\over 2}+1)}{\Gamma({\ell+2\over 2})\over \Gamma(\ell+1-{\ell\over d})}{\Gamma({d+1\over 2})^{\ell+1}\over \Gamma({d\over 2}+1)^{\ell+1-{\ell\over d}}}.
\end{align*}
\end{corollary}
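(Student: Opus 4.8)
The plan is to deduce the expected volume directly from the face-intensity formula of Theorem~\ref{tm:cellintencity} by invoking the elementary reciprocity between the cell intensity and the mean volume of the typical cell. Concretely, for any stationary tessellation $\cT$ of $\RR^\ell$ with finite, positive cell intensity, I claim that
$$\gamma_\ell(\cT)\,\EE\vol(Z)=1,$$
where $Z$ denotes the typical cell (i.e.\ the typical $\ell$-face, obtained as the $j=\ell$ case of the face process). Granting this, the corollary is a one-line reciprocal of the $j=\ell$ instance of Theorem~\ref{tm:cellintencity}.

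First I would establish the reciprocity identity from the definitions in Section~\ref{sec:preliminaries_tess}. Writing $\vol(F)=\int_{\RR^\ell}{\bf 1}(y\in F)\,\dd y$ and using translation invariance of volume in the definition of the typical cell, one gets $\EE\vol(Z)=\gamma_\ell(\cT)^{-1}\int_{\RR^\ell}g(y)\,\dd y$ with $g(y):=\EE\sum_{F\in\cF_\ell(\cT)}{\bf 1}(y\in F)\,{\bf 1}(z(F)\in[0,1]^\ell)$. Shifting the tessellation by $-y$, using stationarity together with the covariance $z(F-y)=z(F)-y$ of the centre function, rewrites $g(y)$ as $\EE\sum_F{\bf 1}(0\in F)\,{\bf 1}(z(F)\in[0,1]^\ell-y)$. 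Integrating in $y$ and using $\int_{\RR^\ell}{\bf 1}(z(F)+y\in[0,1]^\ell)\,\dd y=1$ collapses the inner integral to leave $\EE\sum_F{\bf 1}(0\in F)$, which equals $1$ because a fixed point lies in the interior of exactly one cell almost surely. This gives $\gamma_\ell(\cT)\,\EE\vol(Z)=1$; alternatively one may simply cite the corresponding relation in~\cite[Chapter 10]{SW}.

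With the identity in hand I would specialise Theorem~\ref{tm:cellintencity} to $j=\ell$, observing that the second index of the angle functional becomes $\ell-j+1=1$, so that $\mathbb J_{\ell+1,\ell-j+1}=\mathbb J_{\ell+1,1}$. Forming the reciprocal of the resulting expression for $\gamma_\ell(\cW_{d,\rho}\cap\RR^\ell)$ flips $\rho^{\ell/d}$ to $\rho^{-\ell/d}$, sends the prefactor $\tfrac{2\mathbb J_{\ell+1,1}(\cdots)\pi^{\ell/2}}{d(\ell+1)}$ into the denominator, and inverts the Gamma-quotient; regrouping the inverted quotient into the three displayed fractions reproduces the stated formula verbatim. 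I do not expect a real obstacle here: the sole conceptual input is the reciprocity identity, and the remainder is a formal inversion. The one point needing care is the index bookkeeping in $\mathbb J_{\ell+1,\ell-j+1}$ under $j=\ell$, ensuring that precisely $\mathbb J_{\ell+1,1}$ appears in the final expression.
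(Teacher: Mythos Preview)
Your proposal is correct and matches the paper's own proof, which simply cites the identity $\EE\vol(Z_{d,\ell,\rho})=\gamma_\ell(\cW_{d,\rho}\cap\RR^\ell)^{-1}$ from \cite[Equation~(10.4)]{SW} and then reads off the $j=\ell$ case of Theorem~\ref{tm:cellintencity}. Your sketch of the reciprocity argument is a harmless elaboration of what the paper takes as a black box.
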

\begin{proof}
It is known from~\cite[Equation (10.4)]{SW} that $\EE\vol(Z_{d,\ell, \rho})=\gamma_\ell(\cW_{d,\rho}\cap \RR^\ell)^{-1}$. The right-hand side is known from Theorem \ref{tm:cellintencity}.
\end{proof}

\begin{remark}
Corollary~\ref{cor:volumeintersection} stays true for $\ell= d$ where it gives the expected volume of a  typical Poisson-Voronoi cell to be $\EE\vol(Z_{d,d,\rho}) = 1/\rho$. The quantity $\mathbb J_{\ell+1,1}(-\frac 12)$ cancels with the Gamma-factors by the formula given in~\cite[Theorem~3.9]{kabluchko_recursive_scheme} and the Legendre duplication formula for the Gamma function. Theorem~\ref{tm:cellintencity} also stays true for $\ell=d$ and gives the intensity of $j$-faces in the Poisson-Voronoi tessellation; see~\cite[Remark~2.10]{kabluchko_formula} for another formula.
\end{remark}

In the next result we compute the limit of the intensity of $j$-dimensional faces in the $d$-dimensional Poisson-Voronoi tessellation intersected with $\RR^\ell$ in the regime when $d\to\infty$ while $\ell\in \NN$ stays fixed.

\begin{proposition}\label{prop:lim_gamma_j_d_to_infty}
Fix some $\ell \in \NN$ and $0\leq j\leq \ell$. Let $(\rho_d)_{d\in\NN}$ be a positive sequence  such that $\lim\limits_{d\to\infty}(\rho_d)^{1/d}=\kappa>0$.   Then,
$$
\lim_{d\to\infty} \gamma_j(\cW_{d,\rho_d}\cap \RR^\ell)
=
\frac{\mathbb J_{\ell+1,\ell - j + 1}(\infty)(\kappa^2\pi e)^{\ell\over 2}} {\sqrt{\ell+1}}
\frac {2(\ell-1)!} {\Gamma({\ell\over 2})},
$$
where $\mathbb J_{\ell+1,\ell-j+1}(\infty)$ is the sum of angles at {$(\ell-j)$-dimensional faces} of a regular $\ell$-dimensional simplex $\Sigma_\ell$; see Remark \ref{rm:Jfunction}.
\end{proposition}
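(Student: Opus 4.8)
The plan is to read off the closed-form expression for $\gamma_j(\cW_{d,\rho}\cap\RR^\ell)$ from Theorem~\ref{tm:cellintencity}, set $\rho=\rho_d$, and determine the asymptotics of each factor as $d\to\infty$ with $\ell$ and $j$ held fixed. I would group the formula into four blocks: the prefactor $\rho_d^{\ell/d}$; the angle factor $\mathbb{J}_{\ell+1,\ell-j+1}(\frac{d-\ell-1}{2})$; the elementary factor $\frac{2\pi^{\ell/2}}{d(\ell+1)}\Gamma(\ell+1-\frac\ell d)/\Gamma(\frac{\ell+2}{2})$; and the two Gamma quotients
\[
A_d:=\frac{\Gamma(\frac{(\ell+1)(d-1)}{2}+1)}{\Gamma(\frac{(\ell+1)(d-1)+1}{2})},\qquad B_d:=\frac{\Gamma(\frac d2+1)^{\ell+1-\ell/d}}{\Gamma(\frac{d+1}{2})^{\ell+1}}.
\]
The first two blocks converge at once: since $\rho_d^{1/d}\to\kappa$ we get $\rho_d^{\ell/d}=(\rho_d^{1/d})^\ell\to\kappa^\ell$, and because the argument $\frac{d-\ell-1}{2}\to\infty$ the angle factor tends to $\mathbb{J}_{\ell+1,\ell-j+1}(\infty)$ by the limit relation recorded in Remark~\ref{rm:Jfunction}. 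In the elementary block $\Gamma(\ell+1-\frac\ell d)\to\Gamma(\ell+1)=\ell!$, so that only the explicit $1/d$ remains to be balanced against the Gamma quotients.

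For $A_d$ and the bulk of $B_d$ I would apply the standard asymptotic $\Gamma(x+a)/\Gamma(x+b)\sim x^{a-b}$ as $x\to\infty$. With $x=\frac{(\ell+1)(d-1)}{2}$, $a=1$, $b=\frac12$ this gives $A_d\sim\big(\frac{(\ell+1)d}{2}\big)^{1/2}$. For $B_d$ I would first split off the fractional power,
\[
B_d=\Gamma\big(\tfrac d2+1\big)^{-\ell/d}\left(\frac{\Gamma(\frac d2+1)}{\Gamma(\frac{d+1}{2})}\right)^{\ell+1},
\]
and treat the bracket with the same ratio asymptotic (now $x=d/2$), obtaining $\big(\Gamma(\frac d2+1)/\Gamma(\frac{d+1}{2})\big)^{\ell+1}\sim(d/2)^{(\ell+1)/2}$.

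The delicate term --- and the source of the exponential $e^{\ell/2}$ in the answer --- is $\Gamma(\frac d2+1)^{-\ell/d}$. Here I would invoke Stirling in the form $\frac1d\log\Gamma(\frac d2+1)=\frac12\log\frac d2-\frac12+O(\frac{\log d}{d})$, so that $\Gamma(\frac d2+1)^{-\ell/d}\sim(d/2)^{-\ell/2}e^{\ell/2}$; the surviving $e^{\ell/2}$ comes precisely from multiplying the $-\frac d2$ term of Stirling by the exponent $-\ell/d$. Combining, $B_d\sim(d/2)^{1/2}e^{\ell/2}$, hence $A_dB_d\sim\sqrt{\ell+1}\,\frac d2\,e^{\ell/2}$, and together with the explicit $1/d$ the powers of $d$ cancel exactly:
\[
\frac{2\pi^{\ell/2}}{d(\ell+1)}\,A_dB_d\longrightarrow\frac{\pi^{\ell/2}e^{\ell/2}}{\sqrt{\ell+1}}.
\]
Assembling the four blocks gives $\kappa^\ell\,\mathbb{J}_{\ell+1,\ell-j+1}(\infty)\,\frac{\pi^{\ell/2}e^{\ell/2}}{\sqrt{\ell+1}}\,\frac{\ell!}{\Gamma(\frac{\ell+2}{2})}$; rewriting $\kappa^\ell\pi^{\ell/2}e^{\ell/2}=(\kappa^2\pi e)^{\ell/2}$ and using the identity $\frac{\ell!}{\Gamma(\frac{\ell+2}{2})}=\frac{\ell\,(\ell-1)!}{(\ell/2)\,\Gamma(\ell/2)}=\frac{2(\ell-1)!}{\Gamma(\ell/2)}$ produces the claimed limit. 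The only real obstacle is bookkeeping: one must make sure that the three independent powers of $d$ arising from $A_d$, $B_d$ and the explicit factor $1/d$ cancel to leave a finite nonzero constant, and that the non-obvious factor $e^{\ell/2}$ coming from the fractional exponent $-\ell/d$ acting on a Gamma function of argument $\sim d/2$ is retained. No uniformity issues arise, since $\ell$ and $j$ are fixed throughout.
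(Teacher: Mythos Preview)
Your proposal is correct and follows essentially the same route as the paper's own proof: both start from the explicit formula in Theorem~\ref{tm:cellintencity}, let $\mathbb J_{\ell+1,\ell-j+1}(\cdot)$ and $\rho_d^{\ell/d}$ converge directly, and then handle the Gamma quotients via the ratio asymptotic $\Gamma(x+a)/\Gamma(x+b)\sim x^{a-b}$ together with Stirling's formula to extract the factor $e^{\ell/2}$ from $\Gamma(\tfrac d2+1)^{-\ell/d}$. Your decomposition into the blocks $A_d$ and $B_d$ is merely a more explicit organisation of the same computation the paper carries out in a more compressed form.
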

\begin{remark}
For example, we may take $\rho_d= \rho>0$ to be constant, in which case $\kappa=1$.
\end{remark}
\begin{proof}[Proof of Proposition~\ref{prop:lim_gamma_j_d_to_infty}]
By Theorem~\ref{tm:cellintencity},
\begin{align*}
\lim_{d\to\infty}\gamma_j(\cW_{d,\rho_d}\cap \RR^\ell)
&=
\lim_{d\to\infty}
\frac {2\mathbb J_{\ell+1,\ell - j +1}({d-\ell-1\over 2})\pi^{\ell\over 2}} {d(\ell+1)\rho_d^{-{\ell\over d}}}
\frac {\Gamma({(\ell+1)(d-1)\over 2}+1)} {\Gamma({(\ell+1)(d-1)+1\over 2})}
\frac {\Gamma(\ell+1-{\ell\over d})}{\Gamma({\ell+2\over 2})}
\frac {\Gamma({d\over 2}+1)^{\ell+1-{\ell\over d}}}{\Gamma({d+1\over 2})^{\ell+1}}
\\
&=
\frac {2\mathbb J_{\ell+1,\ell - j +1}(\infty)(\kappa^2\pi)^{\ell\over 2}}{\ell+1}
\frac {2\Gamma(\ell)} {\Gamma({\ell\over 2})}
\lim_{d\to\infty}
\frac 1d
\frac{\Gamma({(\ell+1)(d-1)\over 2}+1)} {\Gamma({(\ell+1)(d-1)+1\over 2})}
\frac{\Gamma({d\over 2}+1)^{\ell+1-{\ell\over d}}}{\Gamma({d+1\over 2})^{\ell+1}}.
\end{align*}
By Stirling's formula for the Gamma function, $\Gamma(z)=\sqrt{2\pi/z}(z/e)^z(1+O(z^{-1}))$.  Since $\lim\limits_{n\to\infty}{\Gamma(n)n^z\over\Gamma(n+z)}=1$ we get
$$
\lim_{d\to\infty}\gamma_j(\cW_{d,\rho_d}\cap \RR^\ell)
=
\frac {\mathbb J_{\ell+1,\ell - j +1}(\infty)(\kappa^2\pi e)^{\ell\over 2}}{\sqrt{\ell+1}}
\frac {2\Gamma(\ell)} {\Gamma({\ell\over 2})}
\lim_{d\to\infty}
\Big(\frac d {4\pi}\Big)^{\ell\over 2d}
=
\frac {\mathbb J_{\ell+1,\ell - j +1}(\infty)(\kappa^2\pi e)^{\ell\over 2}}{\sqrt{\ell+1}}
\frac {2\Gamma(\ell)} {\Gamma({\ell\over 2})}.
$$
This completes the argument.
\end{proof}

We now study  the asymptotic behaviour of the expected volume of the typical cell in the sections of fixed dimension $\ell$ of a high-dimensional  Poisson-Voronoi tessellation.

\begin{corollary}\label{cor:limit}
Let $(\rho_d)_{d\in\NN}$ be a positive sequence  such that $\lim\limits_{d\to\infty}(\rho_d)^{1/d}=\kappa>0$. Then, for every $\ell\in \NN$,
\begin{align*}
\lim_{d\to\infty}\EE\vol(Z_{d,\ell,\rho_d})&={\sqrt{\ell+1}\over \mathbb J_{\ell+1,1}(\infty)(\kappa^2\pi e)^{\ell\over 2}}{\Gamma({\ell\over 2})\over 2(\ell-1)!},
\end{align*}
where $\mathbb J_{\ell+1,1}(\infty)$ is the sum of solid angles of the regular $\ell$-dimensional simplex $\Sigma_\ell$ at its vertices; see Remark~\ref{rm:Jfunction}.
\end{corollary}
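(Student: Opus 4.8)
The plan is to reduce the statement to the two facts already at our disposal: the elementary identity relating the expected volume of the typical cell to a top-dimensional face intensity, and the high-dimensional asymptotics of that intensity established in Proposition~\ref{prop:lim_gamma_j_d_to_infty}.

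First I would recall the relation
$$
\EE\vol(Z_{d,\ell, \rho})=\gamma_\ell(\cW_{d,\rho}\cap \RR^\ell)^{-1},
$$
which was recorded in the proof of Corollary~\ref{cor:volumeintersection} as a consequence of~\cite[Equation~(10.4)]{SW}: the mean volume of the typical cell of a stationary tessellation in $\RR^\ell$ is the reciprocal of the intensity of its $\ell$-dimensional faces. Applied with $\rho=\rho_d$, this gives $\EE\vol(Z_{d,\ell,\rho_d})=\gamma_\ell(\cW_{d,\rho_d}\cap \RR^\ell)^{-1}$ for every $d$, so the entire problem is transferred to the asymptotics of the top-face intensity.

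Next I would specialize Proposition~\ref{prop:lim_gamma_j_d_to_infty} to the case $j=\ell$. Since then $\ell-j+1=1$, the index identity $\mathbb J_{\ell+1,\ell-j+1}(\infty)=\mathbb J_{\ell+1,1}(\infty)$ holds, and the proposition yields
$$
\lim_{d\to\infty}\gamma_\ell(\cW_{d,\rho_d}\cap \RR^\ell)=\frac{\mathbb J_{\ell+1,1}(\infty)(\kappa^2\pi e)^{\ell\over 2}}{\sqrt{\ell+1}}\,\frac{2(\ell-1)!}{\Gamma({\ell\over 2})}.
$$
This limit is a strictly positive and finite constant for each fixed $\ell$, as every factor is positive and finite (recall from Remark~\ref{rm:Jfunction} that $\mathbb J_{\ell+1,1}(\infty)$ is the vertex solid-angle sum of the regular $\ell$-simplex $\Sigma_\ell$, hence positive). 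Because $x\mapsto x^{-1}$ is continuous on $(0,\infty)$, the limit may be interchanged with the reciprocal, so that $\lim_{d\to\infty}\EE\vol(Z_{d,\ell,\rho_d})=\bigl(\lim_{d\to\infty}\gamma_\ell(\cW_{d,\rho_d}\cap \RR^\ell)\bigr)^{-1}$. Taking the reciprocal of the displayed constant and reordering the factors produces exactly the asserted formula.

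There is no substantive obstacle: both ingredients are already proved, and the only point deserving a word of care is precisely the interchange of limit and reciprocal, which is legitimate because Proposition~\ref{prop:lim_gamma_j_d_to_infty} guarantees the limiting intensity is a genuine positive real number rather than $0$ or $\infty$. The sole bookkeeping step worth flagging is the reduction $\mathbb J_{\ell+1,\ell-j+1}\mapsto\mathbb J_{\ell+1,1}$ at $j=\ell$, after which the $\mathbb J_{\ell+1,1}(\infty)$ factor lands in the denominator as claimed.
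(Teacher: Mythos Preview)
Your proposal is correct and matches the paper's proof essentially verbatim: the paper also invokes $\EE\vol(Z_{d,\ell,\rho_d})=\gamma_\ell(\cW_{d,\rho_d}\cap \RR^\ell)^{-1}$ from \cite[Equation~(10.4)]{SW} and then applies Proposition~\ref{prop:lim_gamma_j_d_to_infty} with $j=\ell$. Your added remark about the continuity of $x\mapsto x^{-1}$ and the positivity of the limiting intensity is a welcome clarification but does not depart from the paper's argument.
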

\begin{proof}
This follows from the fact that $\EE\vol(Z_{d,\ell,\rho_d})=\gamma_\ell(\cW_{d,\rho_d}\cap \RR^\ell)^{-1}$ (see \cite[Equation (10.4)]{SW}) by applying Proposition~\ref{prop:lim_gamma_j_d_to_infty} with $j=\ell$.
\end{proof}

In the special cases $\ell =1,2$ and for every fixed $\rho>0$, Corollary \ref{cor:limit} combined with the results of Remark \ref{rm:Jfunction} yields, for any constant $\rho>0$,  the  limit relations
\begin{align*}
\lim_{d\to\infty}\EE\vol(Z_{d,1,\rho}) = {1\over \sqrt{2e}},
\qquad\text{and}\qquad
\lim_{d\to\infty}\EE\vol(Z_{d,2,\rho}) = {\sqrt{3}\over e\pi},
\end{align*}
which were already known from the work of Miles~\cite[pp.~318, 319]{MilesSection}. Moreover, for $\ell= 3$ we get
\begin{align*}
\lim_{d\to\infty}\EE\vol(Z_{d,3, \rho})&=\big(4e^{3/2}(3\arccos(1/3)-\pi)\big)^{-1},
\end{align*}
for example. This follows from the fact that the solid angle at a vertex of a regular tetrahedron is $\theta:={1\over 4\pi}(3\arccos(1/3)-\pi)$, implying that $\JJ_{4,1}(\infty)=\sigma_1(\Sigma_3)=4\theta={3\over\pi}\arccos(1/3)-1$.

\subsection{Expected intrinsic volumes and $f$-vectors of  typical $k$-faces}\label{sec:sectional_poisson_voronoi_formulas2}

Together with the volume of the typical cell $Z_{d,\ell, \rho}$ of the sectional Poisson-Voronoi tessellation we can consider its intrinsic volumes. We {recall from~\cite[p.~222]{SW}} that the intrinsic volume $V_m(K)$ of order $0\leq m\leq d$ of a compact convex set $K\subset\RR^d$ may be defined as
$$
V_m(K) := {d!\over m!(d-m)!}{\Gamma({m\over 2}+1)\Gamma({d-m\over 2}+1)\over \Gamma({d\over 2}+1)}\EE\lambda_m(K|L),
$$
where $L\subset\RR^d$ is a uniformly distributed random subspace of dimension $m$, $K|L$ denotes the orthogonal projection of $K$ onto $L$ and $\lambda_m(K|L)$ its $m$-dimensional Lebesgue measure.  In addition, instead of the typical sectional cell we can consider for $1\leq k\leq\ell$ the typical $k$-face $Z_{d,\ell, \rho}^{(k)}$ of the sectional Poisson-Voronoi tessellation $\cW_{d,\rho}\cap\RR^\ell$, see Section~\ref{sec:preliminaries_tess} or~\cite[page 450]{SW} for a formal definition. For example, for $k=\ell$ we get back the typical cell, for $k=\ell-1$ the typical facet and for $k=1$ the typical edge of the sectional tessellation. Using the results from \cite[pages 466-467]{SW} for general stationary and isotropic random tessellations we conclude (by combining the last two formulas there) that
\begin{equation}\label{eq:E_V_j_typical_k_face_formula}
\EE V_j(Z_{d,\ell, \rho}^{(k)}) = {\ell!\over j!(\ell-j!)}{\Gamma({j\over 2}+1)\Gamma({\ell-j\over 2}+1)\over\Gamma({\ell\over 2}+1)}{\gamma_{k-j}(\cW_{d,\rho}\cap \RR^{\ell-j})\over\gamma_{k}(\cW_{d,\rho}\cap \RR^\ell)},
\end{equation}
where $d\geq 2$, and $1\leq\ell\leq d-1$, $0\leq k\leq \ell$ and $0\leq j\leq k$.
This expression can be made fully explicit in view of Corollary \ref{cor:volumeintersection}:
\begin{multline}
\EE V_j(Z_{d,\ell, \rho}^{(k)}) = \Big({\rho\over \Gamma({d\over 2}+1)}\Big)^{-{j\over d}}{(\ell+1)!\over j!(\ell-j+1)!}{\Gamma({j\over 2}+1)\over \pi^{j/2}}
{\JJ_{\ell-j+1,\ell-k+1}({d-\ell+j-1\over 2})\over\JJ_{\ell+1,\ell-k+1}({d-\ell-1\over 2})}\\
\times
{\Gamma({(\ell-j+1)(d-1)\over 2}+1)\Gamma(\ell-j+1-{\ell-j\over d})\over\Gamma({(\ell+1)(d-1)\over 2}+1)\Gamma(\ell+1-{\ell\over d})}
{\Gamma({(\ell+1)(d-1)+1\over 2})\Gamma({d+1\over 2})^j\over \Gamma({(\ell-j+1)(d-1)+1\over 2})\Gamma({d\over 2}+1)^{j}}.
\end{multline}
For intersections of dimension $\ell=2$, Miles~\cite[Equations (4.4), (4.5) on p.~319]{MilesSection} derived a formula for the expected area and perimeter of the typical cell which are particular cases of the above formula. Using Proposition~\ref{prop:lim_gamma_j_d_to_infty} it is easy to derive the large $d$ limit of~\eqref{eq:E_V_j_typical_k_face_formula}. Namely, if $\lim\limits_{d\to\infty}(\rho_d)^{1/d}=\kappa>0$, then
$$
\lim_{d\to\infty} \EE V_j(Z_{d,\ell, \rho_d}^{(k)})
=
\frac{\sqrt{\ell+1}}{\sqrt{\ell-j+1}}
\cdot
\frac{\Gamma(\frac j2 + 1)}{j! (\kappa^2 \pi e)^{j/2}}
\cdot
\frac{\mathbb J_{\ell-j+1,\ell-k+1}(\infty)}{\mathbb J_{\ell+1,\ell-k+1}(\infty)}.
$$
For intersections of dimensions $\ell = 2$ and $3$ (and $k=\ell$) we recover results of Miles~\cite[pp.~319, 320]{MilesSection}.

Finally, we deal with the expected number of $j$-dimensional faces of the typical cell of the sectional Voronoi tessellation $\cW_{d,\rho}\cap\RR^\ell$, which we denote by $\EE f_j(Z_{d,\ell, \rho})$, for $\rho>0$, $d\geq 2$, $1\leq\ell\leq d-1$ and $0\leq j\leq\ell-1$. Using the fact that, with probability $1$, each $j$-dimensional face of the sectional tessellation is contained in the boundary of precisely $\ell-j+1$ of its cells (by normality of the tessellation), it follows that
$$
\EE f_j(Z_{d,\ell, \rho}) = (\ell-j+1){\gamma_j(\cW_{d,\rho}\cap \RR^\ell)\over \gamma_\ell(\cW_{d,\rho}\cap \RR^\ell)}.
$$
We can now apply Corollary \ref{cor:volumeintersection} to conclude that
$$
\EE f_j(Z_{d,\ell, \rho}) = (\ell-j+1){\JJ_{\ell+1,\ell-j+1}({d-\ell-1\over 2})\over\JJ_{\ell+1,1}({d-\ell-1\over 2})},
$$
independently of $\rho$. Clearly, $\EE f_0(Z_{d,1, \rho})=2$ for any $d\geq 2$. Also, $\EE f_j(Z_{d,2, \rho})=6$ for any $d\geq 2$ and $0\leq j\leq 1$, since the sectional Voronoi tessellation is stationary and normal. Some non-trivial values for space dimensions $d=4,5,6$ are collected in Table~\ref{tab:Fvector}.

{\setlength{\extrarowheight}{.5em}
\begin{table}[t]
\centering
\small{
\begin{tabular}{c|c|c|c|c|c|c}
& $d=4$, $\ell=3$ & $d=5$, $\ell=3$ & $d=5$, $\ell=4$ & $d=6$, $\ell=3$ & $d=6$, $\ell=4$ & $d=6$, $\ell=5$\\
\hline
$j=0$ & $\frac{10\,240}{401}$ & $\frac{67\,200 \pi ^2}{26\,741}$ & $\frac{4\,233\,600 \pi ^2}{1\,692\,197-141\,120 \pi ^2}$ & $\frac{524\,288}{21\,509}$ & $\frac{52\,003}{400}$ &$\frac{34\,394\,098\,106\,368}{37\,477\,698\,299}$ \\
$j=1$ & $\frac{15\,360}{401}$ & $\frac{100\,800 \pi ^2}{26\,741}$ & $\frac{8\,467\,200 \pi ^2}{1\,692\,197-141\,120 \pi ^2}$ & $\frac{786\,432}{21\,509}$ & $\frac{52\,003}{200}$ & $\frac{85\,985\,245\,265\,920}{37\,477\,698\,299}$ \\
$j=2$ & $\frac{5\,922}{401}$ & $2+\frac{33\,600 \pi ^2}{26\,741}$ & $\frac{10\,153\,182+4\,233\,600 \pi ^2}{1\,692\,197-141\,120 \pi ^2}$ &  $\frac{305\,162}{21\,509}$ & $\frac{162\,009}{1\,000}$ & $\frac{74\,276\,903\,321\,600}{37\,477\,698\,299}$\\
$j=3$ & $-$ & $-$ & $\frac{10\,153\,182}{1\,692\,197-141\,120 \pi ^2}$ & $-$ & $\frac{64\,003}{2\,000}$ & $\frac{25\,430\,109\,716\,480}{37\,477\,698\,299}$\\
$j=4$ & $-$ & $-$ & $-$ & $-$ & $-$ & $\frac{53\,194\,508\,510}{707\,126\,383}$
\end{tabular}
\caption{{$\EE f_j(Z_{d,\ell, \rho})$ for small values of $d$, $\ell$ and $j$.}}}
\label{tab:Fvector}
\end{table}}

Similarly, we can compute the expected number of $j$-dimensional faces of the typical $k$-dimensional face  $Z_{d,\ell, \rho}^{(k)}$ of the sectional Poisson-Voronoi tessellation $\cW_{d,\rho}\cap\RR^\ell$ for $d\geq 2$, $1\leq\ell\leq d-1$, $1\leq k\leq\ell$ and $0\leq j\leq k-1$:
$$
\EE f_j(Z_{d,\ell, \rho}^{(k)}) = (k-j+1){\JJ_{\ell+1,\ell-j+1}({d-\ell-1\over 2})\over\JJ_{\ell+1,\ell-k+1}({d-\ell-1\over 2})}.
$$
In the large $d$ limit this becomes
$$
\lim_{d\to\infty} \EE f_j(Z_{d,\ell, \rho}^{(k)}) = (k-j+1){\JJ_{\ell+1,\ell-j+1}(\infty)\over\JJ_{\ell+1,\ell-k+1}(\infty)}.
$$
Again, for $\ell=2,3$ we recover results of Miles~\cite[p.~320]{MilesSection}.

\section{Convergence to the Gaussian-Voronoi tessellation in high dimensions}

In Sections~\ref{sec:sectional_poisson_voronoi_formulas1} and \ref{sec:sectional_poisson_voronoi_formulas2} we computed explicitly several characteristics of the sectional Poisson-Voronoi tessellation $\cW_{d,\rho_d}\cap \RR^\ell$ and the limits of these characteristics in the regime when $d\to\infty$ and $(\rho_d)^{1/d} \to \kappa>0$, while $\ell\in \NN$ stays fixed. It turns out that these limits coincide with the corresponding characteristics of the tessellation $\cG_{\ell,\lambda}$ with $\lambda= \kappa^2 \pi e$.
For example, for the typical cell $Z(\cG_{\ell,\lambda})$ of the Gaussian-Voronoi tessellation $\cG_{\ell,\lambda}$ it is known from~\cite[Section 5]{GKT21} (where the special case $\lambda=1/2$ has been considered)  that
$$
\EE\vol (Z(\cG_{\ell,\lambda}))={\sqrt{\ell+1}\over \JJ_{\ell+1,1}(\infty)\lambda^{\ell\over 2}}{\Gamma({\ell\over 2})\over 2(\ell-1)!}.
$$
This formula coincides with the one obtained in Corollary \ref{cor:limit} if we choose $\lambda = \kappa^2 \pi e$.
In the next two theorems we explain this and other similar coincidences by proving weak convergence of the corresponding tessellations and the typical cells.

\begin{theorem}\label{theo:conv_tess}
Take any positive sequence $(\rho_d)_{d\in\NN}$ with $\lim_{d\to\infty}(\rho_d)^{1/d}=\kappa>0$ and let $\ell\in \NN$ be fixed.  Then, as $d\to\infty$, the sectional Poisson-Voronoi tessellation  $\cW_{d,\rho_d}\cap \RR^\ell$ converges to $\cG_{\ell, \kappa^2\pi e}$ in the following sense: It is possible to define all these random tessellations on the same probability space such that for every 
$\ell$-dimensional ball $B_R\subset \RR^\ell$ of radius $R>0$ centred at the origin the probability that the restrictions of $\cW_{d,\rho_d}\cap \RR^\ell$ and $\cG_{\ell, \kappa^2\pi e}$ to $B_R$ coincide, converges to $1$,  as $d\to\infty$.
\end{theorem}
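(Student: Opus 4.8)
The plan is to transport the problem into the $\beta$-Voronoi world, to normalise the height coordinate by a tessellation-preserving shift so that the driving Poisson process becomes asymptotically Gaussian, and then to combine a graphical coupling with a localisation estimate. By Corollary~\ref{cor:VoronoiIntersection}, $\cW_{d,\rho_d}\cap\RR^\ell$ has the same law as $\cV_{\ell,\beta_d,\gamma_d}=\cL(\eta_{\ell,\beta_d,\gamma_d})$ with
$$
\beta_d:=\tfrac{d-\ell}{2}-1\to\infty,\qquad \gamma_d:=\pi^{(d+1)/2}\rho_d/\Gamma((d+1)/2),
$$
so it suffices to couple $\cL(\eta_{\ell,\beta_d,\gamma_d})$ with the Gaussian-Voronoi tessellation $\cG_{\ell,\lambda}=\cL(\zeta_{\ell,\lambda,1})$, where the computation below will force $\lambda=\kappa^2\pi e$. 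The crucial structural fact is that adding a common constant to all height coordinates leaves every comparison $\|w-v\|^2+h\le\|w-v'\|^2+h'$ unchanged, and hence does not affect the Laguerre diagram; I exploit this freedom to centre the heights.

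Concretely, set $m_d:=(\gamma_d\,c_{\ell+1,\beta_d})^{-1/\beta_d}$ and let $\tilde\eta_d$ be the image of $\eta_{\ell,\beta_d,\gamma_d}$ under $(v,h)\mapsto(v,h-m_d)$; by the previous remark $\cL(\tilde\eta_d)=\cV_{\ell,\beta_d,\gamma_d}$, and $\tilde\eta_d$ is Poisson on $\RR^\ell\times[-m_d,\infty)$ with intensity density $\gamma_d c_{\ell+1,\beta_d}(h+m_d)^{\beta_d}$. As $m_d$ is chosen so that $\gamma_d c_{\ell+1,\beta_d}m_d^{\beta_d}=1$, this density takes the clean form $g_d(v,h)=(1+h/m_d)^{\beta_d}$. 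A Stirling estimate gives $c_{\ell+1,\beta_d}\sim(\beta_d/\pi)^{(\ell+1)/2}$ and, using $d=2\beta_d+\ell+2$ and $\rho_d^{1/d}\to\kappa$ (hence $\rho_d^{1/\beta_d}\to\kappa^2$), one finds
$$
\frac{\beta_d}{m_d}=\beta_d\,(\gamma_d\,c_{\ell+1,\beta_d})^{1/\beta_d}\longrightarrow\kappa^2\pi e=:\lambda .
$$
Since $\beta_d/m_d^2\to 0$, expanding $\beta_d\log(1+h/m_d)$ shows $g_d(v,h)\to e^{\lambda h}$ uniformly on compacts, while the bound $\log(1+x)\le x$ yields $g_d(v,h)\le e^{(\beta_d/m_d)h}$ for all $h\ge-m_d$, with $m_d\to\infty$. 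Thus the intensity measures of $\tilde\eta_d$ converge vaguely to that of $\zeta_{\ell,\lambda,1}$, and this is precisely where the hypothesis $\rho_d^{1/d}\to\kappa$ pins down $\lambda=\kappa^2\pi e$.

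To couple I would use the graphical construction: let $N$ be a unit-intensity Poisson process on $\RR^\ell\times\RR\times[0,\infty)$ and set $\tilde\eta_d=\{(v,h):(v,h,t)\in N,\ t\le g_d(v,h)\}$ and $\zeta_{\ell,\lambda,1}=\{(v,h):(v,h,t)\in N,\ t\le e^{\lambda h}\}$. On any box $K=B'\times[-H,H]\subset\RR^\ell\times\RR$ the expected number of atoms at which the two configurations differ equals $\int_K|g_d-e^{\lambda h}|\,\dd(v,h)$, which tends to $0$ by uniform convergence on $K$; hence $\tilde\eta_d$ and $\zeta_{\ell,\lambda,1}$ coincide on $K$ with probability tending to $1$.

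The genuinely delicate step, and the one I expect to be the main obstacle, is localisation: one must show that, with probability close to $1$ and \emph{uniformly in all large $d$}, the restriction of the tessellation to the ball $B_R\subset\RR^\ell$ is already determined by the atoms lying in a single fixed box $K$. Via the paraboloid growth process this reduces to two quantitative facts — that the lower boundary of $\Psi$ over $B_R$ stays below some height $H_0$, and that any atom $(v,h)$ with $h>H_0$ or with $v$ far from $B_R$ produces a paraboloid lying over $B_R$ at height at least $\operatorname{dist}(v,B_R)^2+h$, which therefore never touches that lower boundary once $H$ and the radius of $B'$ are large. Because $g_d(v,h)\le e^{(\beta_d/m_d)h}$ with $\beta_d/m_d\to\lambda$ and $g_d$ vanishes below the receding floor $-m_d$, the intensity of low-lying atoms is dominated by a fixed exponential for all large $d$, so the probability that some atom outside $K$ influences $B_R$ can be bounded uniformly in $d$, exactly as in the stabilisation argument of~\cite{GKT21}. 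Granting this, on the event (of probability tending to $1$) that $\tilde\eta_d$ and $\zeta_{\ell,\lambda,1}$ agree on $K$ and that both tessellations localise in $K$, the two tessellations coincide on $B_R$; since $\cL(\tilde\eta_d)=\cV_{\ell,\beta_d,\gamma_d}$ has the law of $\cW_{d,\rho_d}\cap\RR^\ell$ and $\cL(\zeta_{\ell,\lambda,1})=\cG_{\ell,\kappa^2\pi e}$, this is the asserted coupling convergence.
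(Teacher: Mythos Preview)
Your proposal is correct and follows essentially the same route as the paper's proof: identify $\cW_{d,\rho_d}\cap\RR^\ell$ with $\cV_{\ell,\beta_d,\gamma_d}$ via Corollary~\ref{cor:VoronoiIntersection}, apply a tessellation-preserving vertical shift so that the intensity becomes $(1+h/m_d)^{\beta_d}\to e^{\lambda h}$ with $\lambda=\kappa^2\pi e$, and then combine a coupling of the Poisson processes with the localisation/stabilisation estimates from~\cite{GKT21}. The only cosmetic differences are that the paper uses a slightly different (asymptotically equivalent) shift $a_d$, invokes a total-variation coupling bound from~\cite{reiss_book} rather than your explicit graphical thinning construction, and takes as its localisation region the paraboloid-shaped set $\{h\le T_0,\ \|v\|\le R+r_0+\sqrt{T_0-h}\}$ (unbounded below) rather than a box---which is why the paper needs an $L^1$ dominated-convergence argument on that region, whereas your bounded box works once you also control atoms with $h<-H$ via the uniform exponential bound you mention.
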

\begin{remark}
The skeleton of a (random) tessellation $\cT$ is the (random) closed set $\skel(\cT)=\bigcup_{c \in \cT}\bd c$, where $\bd c$ denotes the topological boundary of the cell $c$. The  mode of convergence appearing in Theorem~\ref{theo:conv_tess} implies that the random closed set  $\skel (\cW_{d,\rho_d}\cap \RR^\ell)$ converges to the random closed set $\skel(\cG_{\ell, \kappa^2\pi e})$ weakly as $d\to\infty$; see \cite[Chapter~2]{SW} for this concept.
\end{remark}
\begin{proof}[Proof of Theorem~\ref{theo:conv_tess}]
The proof of this theorem basically follows the same route as the proof of Theorem~4.2 in \cite{GKT21}, which is the reason why we leave out some details here. By Corollary~\ref{cor:VoronoiIntersection}, the sectional tessellation $\cW_{d,\rho_d}\cap \RR^\ell$ has the same distribution as $\cV_{\ell,\beta_d,\gamma_d}$ with
$$
\beta_d= \frac 12 (d-\ell) -1
\qquad\text{and}\qquad
\gamma_d=\pi^{d+1\over 2}\Gamma\left({d+1\over 2}\right)^{-1}\rho_d.
$$
We will prove that, as $d\to\infty$, the Poisson point processes $\eta_{\ell, \beta_d,\gamma_d}$ converge, after an appropriate vertical shift, to $\zeta_{\ell,\kappa^2\pi e,1}$; see Section~\ref{subsec:ThreeFamilies} for their definitions. The vertical shift $Q_{d}: \RR^{\ell+1}\to \RR^{\ell+1}$ is given by
$Q_d(v,h)= (v, h-a_d)$ with
\begin{equation}\label{eq:ad}
a_d={1\over \pi}\Big({\pi\Gamma({d-\ell\over 2})\over \rho_d}\Big)^{2\over d-\ell-2}.
\end{equation}
Note that applying such a vertical shift to a point process does not change the resulting Laguerre tessellation since it amounts to shifting all paraboloids along the height coordinate.

In the following, we show that the intensity function  of the Poisson point process $\xi_{\ell,d}:=Q_{d}(\eta_{\ell, \beta_d, \gamma_d})$ converges, as $d\to\infty$, to the intensity function of $\xi_{\ell,\infty}:=\zeta_{\ell,\kappa^2\pi e,1}$  uniformly on every compact subset of $\RR^{\ell+1}$. Indeed, the intensity function of $Q_{d}(\eta_{\ell, \beta_d, \gamma_d})$ is given by
\begin{align*}
f_d(v,h)
&=
{\rho_d\pi^{d-\ell\over 2}\over \Gamma({d-\ell\over 2})} (h+a_d)^{{d-\ell-2\over 2}} {\bf 1} \big( h+a_d>0\big)\\
&=
{\rho_d\pi^{d-\ell\over 2}\over \Gamma({d-\ell\over 2})}a_d^{d-\ell-2\over 2}
\Big(1+{h\over a_d}\Big)^{{d-\ell-2\over 2}}{\bf 1} \big( h+a_d>0\big)
\\
&=
\Big(1+{h\over a_d}\Big)^{a_d\cdot{d-\ell-2\over 2a_d}}{\bf 1} \big( h+a_d>0\big).
\end{align*}
Stirling's formula for the Gamma function and~\eqref{eq:ad} yield
\begin{align*}
\lim_{d\to\infty}{d-\ell-2\over 2a_d}&={1\over \kappa^2\pi}\lim_{d\to\infty}{{d-\ell+2}\over 2\Gamma({d-\ell\over 2})^{2\over d-\ell+2}}=\kappa^2\pi e\lim_{d\to\infty}{{d-\ell+2}\over (d-\ell)^{d-\ell\over d-\ell+2}}=\kappa^2\pi e.
\end{align*}
Note that, in particular, $a_d\to\infty$ as $d\to\infty$. We conclude that  $\lim_{d\to\infty} f_d(v,h) =e^{\kappa^2\pi e h}$ uniformly as long as $h$ stays bounded. {By standard results~\cite[Propositions~3.6 and 3.19]{resnick_book}, this also implies weak convergence of the corresponding Poisson point processes.}

After we have shown the convergence of the  point processes $\xi_{\ell,d}$ to $\xi_{\ell,\infty}$, as $d\to\infty$, we explain the procedure allowing to transfer this result to the convergence of the corresponding tessellations $\cL_{\Psi}(\xi_{\ell,d})$ to $\cL_{\Psi}(\xi_{\ell,\infty})$ as $d\to\infty$. Note that $\cL_{\Psi}(\xi_{\ell,d})$ has the same distribution as $\cW_{d,\rho_d}\cap \RR^\ell$ and $\cL_{\Psi}(\xi_{\ell,\infty})$ has the same distribution as $\cG_{\ell, \kappa^2\pi e}$ (see  Corollary~\ref{cor:VoronoiIntersection} and Section~\ref{par_growth}).

We fix an $\ell$-dimensional ball $B_R\subset \RR^\ell$ of radius $R>0$ centred at the origin and for any $\varepsilon>0$ we aim to find a region $K(R,\varepsilon)\subset \RR^{\ell+1}$, independent of $d$, such that with probability at least $1-\varepsilon$ the restrictions of the tessellations $\cL_{\Psi}(\xi_{\ell,d})$ and $\cL_{\Psi}(\xi_{\ell,\infty})$ to $B_R$ are completely determined by the restrictions of the point processes $\xi_{\ell,d}$ and $\xi_{\ell,\infty}$ to $K(R,\varepsilon)$ for any $d$. To this end, we note that $\cL_{\Psi}(\xi_{\ell,d})$ may be regarded as a vertical projection along the $h$-axis of the boundary of the corresponding paraboloid growth process $\Psi(\xi_{\ell,d})$ (see Section \ref{par_growth}). From this it follows that if the restrictions of the tessellations $\cL_{\Psi}(\xi_{\ell,d})$ and $\cL_{\Psi}(\xi_{\ell,\infty})$ to $B_R$ do not coincide, then the boundaries of the corresponding paraboloid hull processes $\bd\Psi(\xi_{\ell,d})$ and  $\bd\Psi(\xi_{\ell, \infty})$ restricted to the cylinder $B_R\times \RR$ do not coincide as well. The construction of the region $K(R,\varepsilon)$ can be now performed as follows. First, we consider the event $E(T,r)$ that $\bd\Psi(\xi_{\ell,d})$ restricted to the cylinder $B_R\times \RR$ is completely determined by the restriction of $\bd\Psi(\xi_{\ell,d})$ to the cylinder $B_{R+r}\times (-\infty, T]$ for some $T,r>0$. By this we mean that for every paraboloid $\Pi_{+,x}$ with $x\in\ext(\Psi(\xi_{\ell,d}))$ the set $\Pi_{+,x}\cap \bd\Psi(\xi_{\ell,d})$  either  does not intersect $B_R\times \RR$ or is included in $B_{R+r}\times (-\infty, T]$. We have that
\begin{equation}\label{eq:151122a}
1-\PP(E(T,r))\leq c_1 (R+r)^{\ell}(e^{c_2(4T-r^2)}+e^{-c_3T^{c_4}}),\qquad r,d>c_5,
\end{equation}
where all constants $c_1 ,\ldots, c_5$ are positive and independent of the parameters $d,r$ and $T$. Since the proof of this estimate follows exactly the same route as the proof of \cite[Lemma 4.4]{GKT21} (estimate for $T$) and \cite[Lemma 4.5]{GKT21} (estimate for $r$), we decided to omit the technical details. In particular, \eqref{eq:151122a} shows that for any $\varepsilon>0$ there is a choice of $T_0:=T(\varepsilon)$ and $r_0:=r(\varepsilon)$ such that $\PP(E(T_0,r_0))\ge 1- \varepsilon$. The same holds for $\Psi(\xi_{\ell,\infty})$. Further, we note that if a paraboloid $\Pi_{+,x}$ with $x\in\ext(\Psi(\xi_{\ell,d}))$ is such that $\Pi_{+,x}\cap \bd\Psi(\xi_{\ell,d})\subset B_{R+r_0}\times (-\infty, T_0]$, then
$$
x\in \{(v,h)\in\RR^{\ell+1}\colon h\leq T_0, \|v\|\leq R+r_0+\sqrt{T_0-h}\}=:K(R,\varepsilon).
$$
To complete the proof, it suffices to argue that there exists a coupling of $\xi_{\ell,d}$ and $\xi_{\ell,\infty}$ on a common probability space such that the probability that the restrictions of these processes to the region $K(R,\eps)$ do not coincide converges to $0$, as $d\to\infty$. In a suitable coupling, this probability is bounded above by a constant multiple of the $L^1$-norm of the difference of their intensity measures restricted to $K(R,\varepsilon)$, see~\cite[Theorem~3.2.2]{reiss_book}. As we have shown above, the densities $f_d(v,h)$ converge to $e^{\kappa^2\pi e h}$ as $d\to\infty$ uniformly on compact sets and hence pointwise. Also, by the inequality $1+x\leq e^x$, we have $f_d(v,h)\leq e^{c_6 h}$ for some absolute constant $c_6>0$. The fact that this upper bound is integrable over $K(r,\eps)$ has been shown in~\cite[Equation~(4.18)]{GKT21}. Thus, $f_d(v,h) \to e^{\kappa^2\pi e h}$ with respect to the $L^1$-norm on $K(R,\eps)$, which ensures that the required coupling of the Poisson processes indeed exists.  This completes the argument.
\end{proof}

Our aim is now to prove the weak convergence of the typical cell of the sectional Poisson-Voronoi tessellation to the typical cell of the Gaussian-Voronoi tessellation. Fix some $\ell\in \NN$ and let $\cC$ be the space of compact subsets of $\RR^\ell$ endowed with the Hausdorff metric. Put $\cC' = \cC\backslash\{\varnothing\}$.  The typical cells considered below are defined with respect to some fixed centre function $z: \cC' \to \RR^\ell$ in the sense of Section~\ref{sec:preliminaries_tess}, additionally satisfying $z(C) \in C$ for every $C\in \cC'$.

\begin{theorem}\label{theo:conv_typ_cell_corollary}
	Take any positive sequence $(\rho_d)_{d\in\NN}$ with $\lim_{d\to\infty}(\rho_d)^{1/d}=\kappa>0$.  Then, as $d\to\infty$, the distribution of the typical cell of the sectional Poisson-Voronoi tessellation  $\cW_{d,\rho_d}\cap \RR^\ell$ converges to the distribution of the typical cell of the Gaussian-Voronoi tessellation $\cG_{\ell, \kappa^2\pi e}$ weakly on $\cC$.
\end{theorem}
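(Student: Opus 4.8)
The plan is to derive the weak convergence of the typical cell from the strong coupling convergence of the whole tessellations established in Theorem~\ref{theo:conv_tess}, combined with the convergence of the cell intensity from Proposition~\ref{prop:lim_gamma_j_d_to_infty}. Write $\cT_d:=\cW_{d,\rho_d}\cap\RR^\ell$ and $\cT_\infty:=\cG_{\ell,\kappa^2\pi e}$, and introduce the finite unnormalized cell measures on $\cC$,
\[
\mu_d(\,\cdot\,):=\EE\sum_{c\in\cT_d,\,z(c)\in[0,1]^\ell}{\bf 1}\big(c-z(c)\in\,\cdot\,\big),\qquad d\in\NN\cup\{\infty\},
\]
so that the typical cell law is $\PP_{\cT_d}^z=\mu_d/\gamma_\ell(\cT_d)$ with total mass $\mu_d(\cC)=\gamma_\ell(\cT_d)$. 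Since $\cC$ is Polish and weak convergence is tested against bounded continuous functions, it suffices to show $\int_\cC g\,\dd\mu_d\to\int_\cC g\,\dd\mu_\infty$ for every bounded continuous $g\colon\cC\to\RR$: taking $g\equiv1$ recovers $\gamma_\ell(\cT_d)\to\gamma_\ell(\cT_\infty)$ (which already follows from Proposition~\ref{prop:lim_gamma_j_d_to_infty} with $j=\ell$ together with the value of $\gamma_\ell(\cG_{\ell,\kappa^2\pi e})=\EE\vol(Z(\cG_{\ell,\kappa^2\pi e}))^{-1}$ recorded before Theorem~\ref{theo:conv_tess}, and these two limits coincide), and dividing then yields the claim.

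For fixed $R>0$ I would split each sum according to whether the cell is contained in the ball $B_R$ or not,
\[
\int_\cC g\,\dd\mu_d=\EE\!\!\sum_{\substack{c\in\cT_d,\,z(c)\in[0,1]^\ell\\ c\subseteq B_R}}\!\!g(c-z(c))\;+\;\EE\!\!\sum_{\substack{c\in\cT_d,\,z(c)\in[0,1]^\ell\\ c\not\subseteq B_R}}\!\!g(c-z(c))=:A_d(R)+E_d(R),
\]
with the same definition for $d=\infty$. Since $z(c)\in c\cap[0,1]^\ell$, a cell contributing to $E_d(R)$ has diameter exceeding $R-\sqrt\ell$, whence $|E_d(R)|\le\|g\|_\infty\,\EE\,N_d^{>}(R)$, where $N_d^{>}(R)$ counts cells with centre in $[0,1]^\ell$ and diameter larger than $R-\sqrt\ell$. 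For the main term I use the coupling of Theorem~\ref{theo:conv_tess}: on the event $\Omega_R$ that $\cT_d$ and $\cT_\infty$ coincide on $B_R$ (with $\PP(\Omega_R)\to1$ as $d\to\infty$ for fixed $R$), every cell contained in $B_R$ with centre in $[0,1]^\ell$ is simultaneously a cell of both tessellations, carrying the same centre $z(c)$, so that $A_d(R)=A_\infty(R)$ on $\Omega_R$; crucially this sidesteps any continuity issue of the Borel centre function $z$. Off $\Omega_R$ the contributions are bounded by $\|g\|_\infty N_d(R)$ and $\|g\|_\infty N_\infty(R)$ respectively, where $N_d(R)$ counts cells with centre in $[0,1]^\ell$ meeting $B_R$, so by Cauchy--Schwarz $|\EE A_d(R)-\EE A_\infty(R)|\le\|g\|_\infty\big((\EE N_d(R)^2)^{1/2}+(\EE N_\infty(R)^2)^{1/2}\big)\PP(\Omega_R^c)^{1/2}\to0$ as $d\to\infty$. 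Combining the two parts gives, for every $R$, $\limsup_{d\to\infty}\big|\int g\,\dd\mu_d-\int g\,\dd\mu_\infty\big|\le\|g\|_\infty\big(\sup_d\EE N_d^{>}(R)+\EE N_\infty^{>}(R)\big)$, and letting $R\to\infty$ the right-hand side vanishes by the tail estimate below.

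The crux therefore reduces to two uniform-in-$d$ estimates for the paraboloid growth processes $\Psi(\xi_{\ell,d})$ already appearing in the proof of Theorem~\ref{theo:conv_tess}: a tightness bound $\sup_d\EE N_d^{>}(R)\to0$ as $R\to\infty$, and a uniform second moment bound $\sup_d\EE N_d(R)^2<\infty$ for each fixed $R$. Both follow from the stabilization-type estimate \eqref{eq:151122a}, i.e. the analogues of \cite[Lemmas~4.4 and 4.5]{GKT21}: with probability at least $1-\eps$ the portion of $\bd\Psi(\xi_{\ell,d})$ over $B_R$ is determined by the points falling in the region $K(R,\eps)$, on which the intensity functions $f_d$ are uniformly dominated by $e^{c_6 h}$, an integrable majorant. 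As the generating processes are Poisson with these uniformly bounded intensities, the counts $N_d(R)$ are stochastically dominated by Poisson variables with uniformly bounded parameters, yielding the moment bound; while a cell of diameter larger than $R$ forces its generating paraboloid to reach height of order $R^2$, an event whose probability decays in $R$ uniformly in $d$ exactly as in \eqref{eq:151122a}, yielding the tail bound. I expect the main obstacle to be the bookkeeping that converts these geometric containment and stabilization statements into the quantitative diameter tail for cells whose (possibly exterior) nuclei are not controlled by the centre constraint, rather than any genuinely new idea, since the estimates run in complete parallel to those already invoked for Theorem~\ref{theo:conv_tess}.
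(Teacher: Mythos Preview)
Your overall architecture — coupling from Theorem~\ref{theo:conv_tess} plus an integrability control to pass to expectations — is exactly the paper's strategy, and your splitting into $A_d(R)$ and $E_d(R)$ mirrors the paper's ``good event'' $C_n(R)=A_n(R)\cap B_n(R)$ versus its complement. The essential difference is in how the integrability is obtained.

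The paper isolates the argument as a general Proposition~\ref{prop:conv_typ_cell_general} whose only quantitative input, besides the coupling, is the convergence of \emph{first moments} $\gamma_\ell(\cT_d)\to\gamma_\ell(\cT_\infty)$ (already available from Proposition~\ref{prop:lim_gamma_j_d_to_infty}). From this and the almost sure convergence $\eta_d\to\eta_\infty$ of the cell counts (which follows from the coupling), uniform integrability of $(\eta_d)$ is deduced by a Scheff\'e/Vitali-type argument: nonnegative variables converging a.s.\ with converging expectations are automatically uniformly integrable. This immediately kills both the ``bad event'' contribution and the large-cell tail, with no need for second moments or explicit diameter estimates.

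Your route instead asks for a uniform second moment $\sup_d\EE N_d(R)^2<\infty$ (via Cauchy--Schwarz) and a uniform tail $\sup_d\EE N_d^{>}(R)\to0$. Both of your justifications have gaps. For the second moment, the claimed stochastic domination of $N_d(R)$ by a Poisson variable only holds on the stabilization event $E(T_0,r_0)$ of probability $\ge 1-\eps$; on its complement you have no a priori control of $N_d$, so $\EE[N_d^2{\bf 1}(E^c)]$ is unbounded by your argument. For the tail, \eqref{eq:151122a} bounds the \emph{probability} that some large cell exists, not the \emph{expected number} of large cells with centre in $[0,1]^\ell$; passing from one to the other again requires an integrability input you have not established. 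These gaps are likely fillable with substantially more work on the paraboloid process, but the paper's observation that first-moment convergence already forces uniform integrability makes all of it unnecessary. You even record the intensity convergence at the outset — you just do not exploit it in the decisive way.
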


This theorem is a consequence of Theorem~\ref{theo:conv_tess} and the following general result.

\begin{proposition}\label{prop:conv_typ_cell_general}
	Let $(\cT_n)_{n\in \NN}$ be a sequence of stationary random tessellations on $\RR^\ell$ converging to a stationary random tessellation $\cT_\infty$ on $\RR^\ell$ in the following sense: All random tessellations are defined on a common probability space and
	for every $R>0$ we have $\lim\limits_{n\to\infty} \PP[A_n(R)] = 1$, where $A_n(R)$ is the event that the restrictions of $\cT_n$ and $\cT_\infty$ to $[-R,R]^\ell$ coincide. More precisely, $A_n(R)=A_n'(R)\cap A_n''(R)$ with
	\begin{align*}
		A_n'(R)
		&:=
		\{
		\text{for every }  C\in \cT_n \text{ s.t. }  C\cap [-R, R]^\ell\neq \varnothing  \text{ there is } C'\in \cT_\infty \text{ s.t. } C\cap [-R, R]^\ell = C'\cap [-R, R]^\ell
		\},\\
		A_n''(R)
		&:=
		\{
		\text{for every }  C'\in \cT_\infty \text{ s.t. }  C'\cap [-R, R]^\ell\neq \varnothing  \text{ there is } C\in \cT_n \text{ s.t. } C\cap [-R, R]^\ell = C'\cap [-R, R]^\ell
		\}.
	\end{align*}
	Also, suppose that the cell intensity of $\cT_n$ converges to that of $\cT_\infty$, that is $\lim\limits_{n\to\infty} \gamma_\ell(\cT_n) = \gamma_\ell(\cT_\infty)$, and that all these intensities are finite. Then, the distribution of the typical cell of $\cT_n$ converges to the distribution of the typical cell of $\cT_\infty$ weakly on $\cC$:
	\begin{equation}\label{eq:typ_cell_conv_weak}
		\PP^{z}_{\cT_n,\ell} {\overset{}{\underset{n\to\infty}\longrightarrow}} \PP^{z}_{\cT_\infty,\ell},
		\qquad
		\text{ weakly on } \cC.
	\end{equation}
\end{proposition}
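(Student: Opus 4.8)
The plan is to verify weak convergence directly from its definition, so it suffices to show that $\int_{\cC} g\,\dd\PP^{z}_{\cT_n,\ell}\to\int_{\cC} g\,\dd\PP^{z}_{\cT_\infty,\ell}$ for every bounded continuous $g\colon\cC\to\RR$, say with $|g|\le M$. Using the Palm-type formula defining the typical cell together with the assumed convergence $\gamma_\ell(\cT_n)\to\gamma_\ell(\cT_\infty)\in(0,\infty)$, this reduces to proving $\EE[S_n]\to\EE[S_\infty]$, where $S_n:=\sum_{F\in\cF_\ell(\cT_n)}g(F-z(F))\,\mathbf{1}(z(F)\in[0,1]^\ell)$ and $S_\infty$ is defined analogously; dividing the limit by $\gamma_\ell(\cT_n)\to\gamma_\ell(\cT_\infty)$ then gives the claim.

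Next I would introduce a spatial cutoff. Fix $R>0$ and split $S_n=S_n^{\mathrm{in}}(R)+S_n^{\mathrm{out}}(R)$ according to whether $F\subset(-R,R)^\ell$ or not; let $N_n(R)$ be the number of cells counted in $S_n^{\mathrm{out}}(R)$ (centre in $[0,1]^\ell$ but not contained in the open box) and $c_n^{\mathrm{in}}(R)$ the number counted in $S_n^{\mathrm{in}}(R)$, so that $c_n^{\mathrm{in}}(R)+N_n(R)$ is the total count of cells with centre in $[0,1]^\ell$, whose mean is $\gamma_\ell(\cT_n)$. The crucial structural fact is a matching property: on $A_n(R)$ a compact convex cell contained in the \emph{open} box $(-R,R)^\ell$ cannot be extended outside it without enlarging its restriction to $[-R,R]^\ell$, so the collection of $\cT_n$-cells contained in $(-R,R)^\ell$ equals, as a set of subsets of $\RR^\ell$, the corresponding collection for $\cT_\infty$. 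Hence $S_n^{\mathrm{in}}(R)\,\mathbf{1}_{A_n(R)}=S_\infty^{\mathrm{in}}(R)\,\mathbf{1}_{A_n(R)}$ and $c_n^{\mathrm{in}}(R)\,\mathbf{1}_{A_n(R)}=c_\infty^{\mathrm{in}}(R)\,\mathbf{1}_{A_n(R)}$ pathwise; for these matched cells $z(F)$, and therefore $g(F-z(F))$, are literally identical, so no continuity estimate is needed for the bulk contribution.

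The heart of the argument is a uniform tail estimate, which I would obtain not from an a priori moment bound but from the convergence of intensities. Put $\delta(R):=\EE N_\infty(R)=\gamma_\ell(\cT_\infty)-\EE\,c_\infty^{\mathrm{in}}(R)$. Since the total count of $\cT_\infty$-cells with centre in $[0,1]^\ell$ is integrable (its mean is $\gamma_\ell(\cT_\infty)<\infty$) and $c_\infty^{\mathrm{in}}(R)$ increases to that count as $R\to\infty$, monotone convergence gives $\delta(R)\to0$. Combining the matching identity $\EE[c_n^{\mathrm{in}}(R)\mathbf{1}_{A_n}]=\EE[c_\infty^{\mathrm{in}}(R)\mathbf{1}_{A_n}]\to\EE\,c_\infty^{\mathrm{in}}(R)$ (dominated convergence, using $\mathbf{1}_{A_n}\to1$ in probability and domination by the integrable total count of $\cT_\infty$) with $\EE\,c_n^{\mathrm{in}}(R)\le\gamma_\ell(\cT_n)\to\gamma_\ell(\cT_\infty)$, one deduces both $\limsup_n\EE N_n(R)\le\delta(R)$ and $\limsup_n\EE[c_n^{\mathrm{in}}(R)\mathbf{1}_{A_n^c}]\le\delta(R)$. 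I expect this interchange — extracting uniform smallness of the large-cell contribution of the moving tessellations $\cT_n$ purely from intensity convergence and the exact matching of small cells — to be the main obstacle, since a naive bound would require uniform integrability of the cell counts, which is not among the hypotheses.

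Finally I would combine the pieces. Writing $\EE S_n=\EE[S_n^{\mathrm{in}}(R)\mathbf{1}_{A_n}]+\EE[S_n^{\mathrm{in}}(R)\mathbf{1}_{A_n^c}]+\EE S_n^{\mathrm{out}}(R)$, the first term tends to $\EE S_\infty^{\mathrm{in}}(R)$ by the matching identity and dominated convergence, while the latter two are bounded in absolute value by $M\,\EE[c_n^{\mathrm{in}}(R)\mathbf{1}_{A_n^c}]$ and $M\,\EE N_n(R)$; thus $\limsup_n|\EE S_n-\EE S_\infty^{\mathrm{in}}(R)|\le2M\delta(R)$. Since $|\EE S_\infty^{\mathrm{in}}(R)-\EE S_\infty|\le M\delta(R)$ as well, we get $\limsup_n|\EE S_n-\EE S_\infty|\le3M\delta(R)$ for every $R$; letting $R\to\infty$ gives $\EE S_n\to\EE S_\infty$, and after division by $\gamma_\ell(\cT_n)\to\gamma_\ell(\cT_\infty)$ this establishes $\int g\,\dd\PP^{z}_{\cT_n,\ell}\to\int g\,\dd\PP^{z}_{\cT_\infty,\ell}$, hence \eqref{eq:typ_cell_conv_weak}. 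The same tail estimate furnishes tightness of the family of typical cells, should one prefer to argue via tightness together with convergence on a determining class.
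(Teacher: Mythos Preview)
Your proposal is correct and follows essentially the same strategy as the paper: reduce to $\EE S_n\to\EE S_\infty$, match cells inside a large box on the event $A_n(R)$, and control the leftover using convergence of intensities. The only difference is packaging: the paper names the tail control as a uniform-integrability statement for the cell counts $\eta_n$ (using ``$\eta_n\to\eta_\infty$ in probability, $\EE\eta_n\to\EE\eta_\infty$, $\eta_n\ge0$ $\Rightarrow$ UI''), whereas you unwind that argument and bound $\EE N_n(R)$ and $\EE[c_n^{\mathrm{in}}(R)\mathbf{1}_{A_n^c}]$ directly by $\delta(R)=\gamma_\ell(\cT_\infty)-\EE c_\infty^{\mathrm{in}}(R)$; the underlying computation is the same.
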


\begin{proof}
	Let $f:\cC\to \RR$ be a bounded continuous function and recall that $z(C)$ is the centre of a cell $C$. Consider the random variables
	$$
	\xi_n := \sum_{C\in \cT_n, \, z(C)\in [0,1]^\ell} f(C),
	\qquad
	n\in \NN\cup\{\infty\}.
	$$
	By definition, the distribution of the typical cell of $\cT_n$ satisfies
	$$
	\int_\cC f \,\dd \PP^{z}_{\cT_n,\ell} = \frac {\EE \xi_n} {\gamma_\ell(\cT_n)},
	\qquad
	n\in \NN\cup\{\infty\}.
	$$
	Since $\lim\limits_{n\to\infty} \gamma_\ell(\cT_n) = \gamma_\ell (\cT_\infty)$, to prove~\eqref{eq:typ_cell_conv_weak} it suffices to check that
	\begin{equation}\label{eq:conv_typ_cells_proof_need}
		\lim_{n\to\infty} \EE \xi_n
		=
		\EE \xi_\infty.
	\end{equation}
	In the following we shall define a ``good'' event $C_n(R)$ on which $\xi_n$ and $\xi_\infty$ are equal. For $n\in \NN \cup\{\infty\}$ and $R>3$ consider the random event
	$$
	B_n(R) := \{ \nexists C\in \cT_n : C\cap [0,1]^\ell \neq \varnothing, C\not \subset[-R/2, R/2]^\ell\}.
	$$
	A cell $C$ with the properties listed in this definition is called a ``long cell'' in the tessellation $\cT_n$. The event $B_n(R)$ occurs if there is no long cell. On the event $A_n(R)$, each long cell $C\in \cT_n$ corresponds to a long cell $C'\in \cT_\infty$ with the same restriction to $[-R,R]^\ell$.
	It follows that $A_n(R) \cap (B_n(R))^c  \subset (B_\infty(R))^c$. Since all cells in $\cT_\infty$ are bounded almost surely, the maximal diameter of a cell in $\cT_\infty$ intersecting $[0,1]^\ell$ is some almost surely finite random variable $M$. It follows that
	$$
	\lim_{R\to\infty} \sup_{n\in\NN} \PP[A_n(R)\cap (B_n(R))^c] \leq \lim_{R\to\infty} \PP[(B_\infty(R))^c] \leq  \lim_{R\to\infty} \PP[M > (R/2) - 1] = 0.
	$$
	Recall also that $\lim_{n\to\infty} \PP[A_n(R)] = 1$ for every fixed $R>0$. Consider now the ``good'' event $C_n(R) := A_n(R) \cap B_n(R)$ and the ``bad'' event $D_n(R):= (C_n(R))^c$. It follows from the above that
	\begin{equation}\label{eq:lim_lim_D_n_R}
		\lim_{R\to\infty} \limsup_{n\to\infty} \PP[D_n(R)] = 0.
	\end{equation}
	Note that, on the event $C_n(R)$, the sets $\{C\in \cT_n: z(C)\in [0,1]^\ell\}$ and $\{C'\in \cT_\infty: z(C')\in [0,1]^\ell\}$ are equal, implying that $\xi_n = \xi_\infty$. Indeed, every cell $C\in \cT_n$ with $z(C)\in [0,1]^\ell$ is contained in $[-R/2, R/2]^\ell$ (since $z(C)\in C$ and there is no long cell) and, consequently, $C$ is also a cell of $\cT_\infty$ (since $A_n(R)$ occurs). Conversely, for every cell $C'\in \cT_\infty$ with $z(C') \in [0,1]^\ell$ there is a cell $C\in \cT_n$ such that the restrictions of $C$ and $C'$ to $[-R,R]^\ell$ coincide (since $A_n(R)$ occurs) and $C$ is not long (since $B_n(R)$ occurs), which implies that $C=C'$.
	It follows that
	$$
	\EE [{\bf 1}(C_n(R))\, \xi_n]
	=
	\EE [{\bf 1}(C_n(R))\, \xi_\infty],
	\qquad n\in \NN,
	\,
	R>3.
	$$
	To complete the proof of~\eqref{eq:conv_typ_cells_proof_need}, it suffices to verify that
	$$
	\lim_{R\to\infty} \limsup_{n\to\infty}
	\EE [{\bf 1}(D_n(R))\, \xi_n]
	=0,
	\qquad
	\lim_{R\to\infty} \limsup_{n\to\infty}
	\EE [{\bf 1}(D_n(R))\, \xi_\infty]
	=0.
	$$
	In view of~\eqref{eq:lim_lim_D_n_R}, it suffices to check that the family $\{\xi_n: n\in \NN\cup\{\infty\}\}$ is uniformly integrable. Now,
	$$
	|\xi_n|\leq \|f\|_\infty\eta_n
	\quad
	\text{ with }
	\quad
	\eta_n:= \sum_{C\in \cT_n, \, z(C)\in [0,1]^\ell} 1,
	\qquad
	n\in \NN\cup\{\infty\}.
	$$
As we already observed above, outside the event $D_n(R)$ we have $\eta_n = \eta_\infty$. From~\eqref{eq:lim_lim_D_n_R} it follows that $\eta_n\to\eta_\infty$ almost surely. On the other hand, we have $\EE \eta_n = \gamma_\ell(\cT_n) \to \gamma_\ell (\cT_\infty) = \EE \eta_\infty$ by assumption of the proposition (and all these expectations are finite). These two properties together with $\eta_n\geq 0$ imply  that the family $\{\eta_n: n\in \NN\cup\{\infty\}\}$ is uniformly integrable. Indeed, if this were not the case, we could find $\eps>0$ such that, after passing to a subsequence, $\EE [\eta_n {\bf 1} (Q_n)]>\eps$ for some events $Q_n$ with $\PP[Q_n] < 1/2^n$. Let $W_k:= \cup_{n=k}^\infty Q_n$. If $K$ is sufficiently large, then $\EE [\eta_\infty {\bf 1}(W_K)]<\eps/2$, while $\EE [\eta_n {\bf 1}(W_K)]>\eps$ for all $n\geq K$. Applying Fatou's lemma to the variables $(\eta_n {\bf 1}((W_K)^c))_{n\geq K}$ leads to a contradiction with the assumption $\EE \eta_n \to \EE \eta_\infty$. Finally, the bound $|\xi_n|\leq \|f\|_\infty\eta_n$ implies that  the family $\{\xi_n: n\in \NN\cup\{\infty\}\}$ is uniformly integrable as well, and the proof is complete.
\end{proof}

\subsection*{Acknowledgement}
ZK and CT were supported by the DFG priority program SPP 2265 \textit{Random Geometric Systems}. AG and ZK were supported by the DFG under Germany's Excellence Strategy  EXC 2044 -- 390685587, \textit{Mathematics M\"unster: Dynamics - Geometry - Structure}.

{\small\bibliographystyle{plain}
\bibliography{Bibliography}}

\end{document}